\documentclass[12pt]{amsart}

\usepackage[margin=1.1in]{geometry}
\usepackage{amsmath,amssymb,amsthm}
\usepackage{booktabs}
\usepackage{microtype}
\usepackage[colorlinks=true,linkcolor=blue,citecolor=blue,urlcolor=blue]{hyperref}

\theoremstyle{plain}
\newtheorem{theorem}{Theorem}[section]
\newtheorem{lemma}[theorem]{Lemma}

\theoremstyle{definition}
\newtheorem{definition}{Definition}[section]
\newtheorem{example}[theorem]{Example}
\theoremstyle{remark}
\newtheorem{remark}[theorem]{Remark}

\newcommand{\alg}[1]{\mathfrak{#1}}
\newcommand{\RR}{\mathbb{R}}
\newcommand{\CC}{\mathbb{C}}
\newcommand{\HH}{\mathbb{H}}
\newcommand{\OO}{\mathbb{O}}
\newcommand{\Idm}{\mathrm{Id}}
\newcommand{\Ker}{\operatorname{Ker}}
\newcommand{\diag}{\operatorname{diag}}
\newcommand{\imd}{\operatorname{Im}}

\begin{document}

\title{On the Structure of Sub-Riemannian Geodesic Orbit Manifolds}

\author{Huihui An}
\address{School of Mathematics, Liaoning Normal University, Dalian, Liaoning Province, 116029,
People's Republic of China}
\email{anhh@lnnu.edu.cn}

\author{Zaili Yan$^{*}$}
\address{School of Mathematics and Statistics, Ningbo University, Ningbo, Zhejiang Province,
315211, People's Republic of China}
\email{yanzaili@nbu.edu.cn}
\thanks{Z. Yan is the corresponding author.}

\author{Hui Zhang}
\address{School of Mathematics, Southeast University, Nanjing 211189, People's Republic of China}
\email{huizhang@mail.nankai.edu.cn}

\author{Shaoxiang Zhang}
\address{College of Mathematics and Systems Science, Shandong University of Science and
Technology, Qingdao 266590, People's Republic of China}
\email{zhangshaoxiang93@163.com}

\subjclass[2010]{Primary 53C17; Secondary 53C30, 53C22}
\keywords{Sub-Riemannian manifold; normal extremal; geodesic orbit manifold}
\date{}

\begin{abstract}
We study homogeneous sub-Riemannian manifolds whose normal extremals are orbits of one-parameter
subgroups of the acting group. Our first main result shows that every sub-Riemannian geodesic orbit
manifold satisfies the Goh condition and consequently admits no strictly abnormal length minimizers.
Our second main result classifies all compact, connected, simply connected sub-Riemannian geodesic
orbit manifolds with nonabelian simple isotropy group, thereby extending the Riemannian
classification of Chen, Nikolayevsky and Nikonorov.
\end{abstract}

\maketitle

\tableofcontents

\section{Introduction and main results}\label{sec:intro}

A sub-Riemannian structure on a connected smooth manifold $M$ consists of a smooth
bracket-generating subbundle $\mathcal{D}\subset TM$, together with a smooth positive definite
quadratic form $Q$ (a sub-Riemannian metric) defined on the fibers of $\mathcal{D}$. Bracket
generation means that, for every $p\in M$, the local smooth sections of $\mathcal{D}$ near $p$,
together with all their iterated Lie brackets, span the tangent space $T_pM$. By the
Chow--Rashevskii theorem \cite{Chow,Rashevskii}, any two points of $M$ can be joined by an
admissible curve, that is, an absolutely continuous curve $\gamma$ with
$\dot{\gamma}(t)\in\mathcal{D}_{\gamma(t)}$ for almost every $t$; the sub-Riemannian distance is
the infimum of the $Q$-lengths of admissible curves.

A geodesic is a curve that is locally a constant-speed length minimizer. By the Pontryagin Maximum
Principle, length minimizers lift to extremals in the cotangent bundle $T^*M$. The normal
extremals are precisely the integral curves of the Hamiltonian vector field of
\begin{equation}\label{eq:ham}
H(\lambda)=\tfrac12\,Q^{*}(\lambda|_{\mathcal{D}},\lambda|_{\mathcal{D}}),\qquad \lambda\in T^*M,
\end{equation}
where $Q^{*}$ is the dual metric on the dual bundle $\mathcal{D}^{*}$. The projections of normal
extremals are the normal geodesics. A geodesic that admits no normal lift is called
strictly abnormal. It is a classical fact that the Goh condition
\begin{equation}\label{eq:goh}
\mathcal{D}_p+[\mathcal{D},\mathcal{D}]_p=T_pM\qquad(p\in M)
\end{equation}
forces every geodesic to be normal; see \cite[Corollary 12.14]{Agrachev}. Here
$[\mathcal{D},\mathcal{D}]_p$ denotes the span of $[Y_1,Y_2](p)$ for local smooth sections
$Y_1,Y_2$ of $\mathcal{D}$.

Let $(M,\mathcal{D},Q)$ be a homogeneous sub-Riemannian manifold, and let $G$ be a connected Lie
group acting effectively and transitively on $M$ by smooth isometries. Fix a point $p\in M$ and
let $H$ be the isotropy subgroup of $G$ at $p$, so that $M=G/H$ and $H$ is compact; denote by
$\mathfrak{g}$ and $\mathfrak{h}$ the Lie algebras of $G$ and $H$, respectively. Following
Berestovskii \cite{Berestovskii}, the structure is encoded by the following data (see
\cite{Berestovskii,YAD}): a reductive decomposition $\mathfrak{g}=\mathfrak{h}\oplus\mathfrak{m}$
with $\mathfrak{m}$ an $\mathrm{Ad}(H)$-invariant complement, an $\mathrm{Ad}(H)$-invariant
bracket-generating subspace $\Delta\subset\mathfrak{m}$ (meaning that $\mathfrak{h}\oplus\Delta$
generates $\mathfrak{g}$ as a Lie algebra), and an $\mathrm{Ad}(H)$-invariant inner product
$\langle\cdot,\cdot\rangle_\Delta$ on $\Delta$. The distribution is
$\mathcal{D}=\bigcup_{g\in G}g_*(\Delta)$ and $Q$ is determined by $\langle\cdot,\cdot\rangle_\Delta$.
We denote this homogeneous sub-Riemannian manifold by
$(G/H,\Delta,\langle\cdot,\cdot\rangle_\Delta)$.

\begin{definition}[Sub-Riemannian geodesic orbit manifolds]\label{def:GO}
A connected homogeneous sub-Riemannian manifold $(M,\mathcal{D},Q)$ is geodesic orbit
(abbreviated GO) with respect to $G$ if every normal extremal in $T^*M$ is an orbit of a
one-parameter subgroup of $G$ under the cotangent lift of the $G$-action. In this case every
normal geodesic is an orbit $\exp(tX)\cdot p$ of a one-parameter subgroup, and a nonzero vector
$X\in\mathfrak{g}$ with this property is called a geodesic vector.
\end{definition}

This notion generalizes the classical Riemannian geodesic orbit property
\cite{Gordon96,KowalskiVanhecke}; we refer the reader to
\cite{Arvanitoyeorgos,BerestovskiiNikonorov} for further background and recent results.

Our first main result establishes a two-step generation property; it answers in the affirmative a
natural question concerning the relationship between the GO property and the Goh condition posed
in \cite[Remark 1.3]{YAD}.

\begin{theorem}[Two-step generation]\label{thm:twostep}
If $(M,\mathcal{D},Q)$ is geodesic orbit with respect to $G$, then
\[ \mathcal{D}_p+[\mathcal{D},\mathcal{D}]_p=T_pM\qquad\text{for every }p\in M. \]
Equivalently, for an $\mathrm{Ad}(H)$-invariant reductive decomposition
$\mathfrak{g}=\mathfrak{h}\oplus\mathfrak{m}$ and the horizontal subspace
$\Delta=\mathcal{D}_{eH}\subset\mathfrak{m}$,
\begin{equation}\label{eq:twostep}
\mathfrak{g}=\mathfrak{h}+\Delta+[\Delta,\Delta],
\end{equation}
where $[\Delta,\Delta]$ denotes the linear span of the indicated Lie brackets. Consequently, every
constant-speed locally length-minimizing geodesic on a sub-Riemannian geodesic orbit manifold has
a normal lift; in other words, there are no strictly abnormal length minimizers.
\end{theorem}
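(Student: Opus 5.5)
The plan is to argue by contradiction, using the Lie-algebraic form of the GO property. Throughout, normal extremals are encoded by covectors $\lambda\in\mathfrak g^*$ with $\lambda|_{\mathfrak h}=0$, and $\lambda^\sharp\in\Delta$ denotes the vector dual to $\lambda|_\Delta$ via $\langle\cdot,\cdot\rangle_\Delta$.

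\emph{Step 1 (geodesic lemma).} First I derive the criterion I will use. The normal Hamiltonian flow starting at $\lambda$ is the orbit of $\exp(tX)$ under the cotangent lift if and only if $X=Z+\lambda^\sharp$ for some $Z\in\mathfrak h$ and
\[ \lambda([Z+\lambda^\sharp,Y]_{\mathfrak m})=0\qquad\text{for all }Y\in\mathfrak m. \]
To see this, I write the Hamiltonian equations of \eqref{eq:ham} on $G/H$ in terms of the coadjoint action. I compare $\dot\lambda$ with $\mathrm{ad}^*_X\lambda$ and use that the $\mathfrak h$-component of $X$ acts trivially at $eH$. So GO with respect to $G$ means that for every such $\lambda$ a suitable $Z=Z(\lambda)\in\mathfrak h$ exists.

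\emph{Step 2 (choice of covectors).} Suppose \eqref{eq:twostep} fails, and put $V=\mathfrak h+\Delta+[\Delta,\Delta]\subsetneq\mathfrak g$. This subspace is $\mathrm{ad}(\mathfrak h)$-invariant, because $[\mathfrak h,\Delta]\subset\Delta$ and the Jacobi identity handles $[\Delta,\Delta]$. Since $\mathfrak h\oplus\Delta$ generates $\mathfrak g$ and $V\neq\mathfrak g$, $V$ cannot be a subalgebra containing $\Delta$ and $\mathfrak h$. Hence $[\Delta,V]\not\subset V$; concretely, $[\Delta,[\Delta,\Delta]]\not\subset V$. I therefore pick $\lambda_0\in\mathfrak g^*$ with $\lambda_0|_V=0$, together with $A\in\Delta$ and $Y\in[\Delta,\Delta]$ such that $\lambda_0([A,Y])\neq 0$. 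Next I take any $\mu$ vanishing on $\mathfrak h$ with $\mu^\sharp=A$, and consider the family $\lambda_s=\mu+s\lambda_0$ for $s\in\RR$. Because $\lambda_0|_\Delta=0$, every $\lambda_s$ has $\lambda_s^\sharp=A$, so Step 1 yields $Z_s\in\mathfrak h$ with
\[ \mu([Z_s+A,Y])+s\,\lambda_0([Z_s+A,Y])=0 . \]
For this I test with this $Y$; note that $[\mathfrak h,\mathfrak m]\subset\mathfrak m$, so the $\mathfrak m$-projection is harmless for $\lambda_0$ and $\mu$. The term $\lambda_0([Z_s,Y])$ vanishes since $[\mathfrak h,V]\subset V$. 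This leaves
\[ s\,\lambda_0([A,Y])=-\mu([Z_s,Y])-\mu([A,Y]). \]

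\emph{Step 3 (the main obstacle).} If the $Z_s$ could be chosen bounded in $s$, dividing by $s$ and letting $s\to\infty$ would force $\lambda_0([A,Y])=0$, a contradiction. The difficulty is that $Z_s$ is only determined modulo the stabilizer-type subspace $\{Z\in\mathfrak h:\lambda_s([Z,\cdot])|_{\mathfrak m}=0\}$, so a priori it could grow linearly. I would handle this in two ways. First, I use the freedom in Step 1 to take $Z_s$ orthogonal, with respect to an $\mathrm{Ad}(H)$-invariant inner product on $\mathfrak h$, to that subspace. Second, I exploit the full system of equations, for all $Y'\in\mathfrak m$ and not just the chosen $Y$. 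If $|Z_s|/s$ does not tend to $0$, I normalize $W_s=Z_s/|Z_s|$ and pass to a limit $W$, using compactness of the unit sphere of $\mathfrak h$. Dividing the full system by $\max(s,|Z_s|)$ should show that $W$ satisfies $\lambda_0([W,\cdot])|_{\mathfrak m}=0$ together with a compatible condition for $\mu$. This should contradict the orthogonal normalization, or else reduce to the bounded case. Some care is needed with the choice of $\mu$, for instance taking $\mu$ generic among covectors with $\mu^\sharp=A$, and I expect this compactness-and-normalization argument to be the hardest part.

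\emph{Step 4 (consequences).} Once \eqref{eq:twostep} holds at $eH$, translating by $G$ gives $\mathcal D_p+[\mathcal D,\mathcal D]_p=T_pM$ for every $p$. The absence of strictly abnormal minimizers then follows from the Goh condition \eqref{eq:goh} via \cite[Corollary 12.14]{Agrachev}.
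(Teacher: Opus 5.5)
Your Steps 1, 2 and 4 are fine. Step 3, however, is a hope rather than an argument, and the compactness sketch cannot repair it. With $\mu$ an arbitrary covector satisfying $\mu^\sharp=A$, your own identity $s\,\lambda_0([A,Y])=-\mu([Z_s,Y])-\mu([A,Y])$ shows the following under the contradiction hypothesis. Every admissible $Z_s$ satisfies $|Z_s|\ge cs-c'$ with $c>0$, unless $\mu([\mathfrak h,Y])=0$, in which case you are already done.

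So there is no bounded case to fall back on, and the contradiction would have to come from the normalized limit $W=\lim Z_s/|Z_s|$. That limit lies in $\ker L_0$, where $L_0Z=\lambda_0([Z,\cdot])|_{\mathfrak m}$. Your normalization makes it orthogonal to $\lim_{s\to\infty}\ker(L_0+s^{-1}L_1)$, where $L_1Z=\mu([Z,\cdot])|_{\mathfrak m}$. But this limit subspace is in general a proper subspace of $\ker L_0$, since the kernel dimension of the pencil can jump at $s=\infty$. So the orthogonal normalization yields no contradiction. Your sketch also never says what the ``compatible condition for $\mu$'' is or why it fails.

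The missing idea is to choose $\mu$ and $Y$ so that the $Z_s$-term vanishes identically, which is exactly the case $\mu([\mathfrak h,Y])=0$ above. Fix an $\mathrm{Ad}(H)$-invariant extension $\langle\cdot,\cdot\rangle$ of $\langle\cdot,\cdot\rangle_\Delta$ to $\mathfrak m$. Take $\mu=\langle A,(\cdot)_{\mathfrak m}\rangle$, which vanishes on $\mathfrak h\oplus\Delta^\perp$. Replace $Y$ by its component $Y'\in\Delta^\perp$ along $\mathfrak h\oplus\Delta$. Then:
\begin{itemize}
\item since $[A,\mathfrak h\oplus\Delta]\subset\Delta+[\Delta,\Delta]\subset V$, you still have $\lambda_0([A,Y'])=\lambda_0([A,Y])\neq0$;
\item since $Y'\in V$, you still have $\lambda_0([Z_s,Y'])=0$;
\item since $\Delta^\perp$ is $\mathrm{Ad}(H)$-invariant, $[Z_s,Y']\in\Delta^\perp$, so $\mu([Z_s,Y'])=0$.
\end{itemize}
Your identity becomes $\mu([A,Y'])+s\,\lambda_0([A,Y'])=0$ for all $s$. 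Taking $s=0$ and $s=1$ gives the contradiction, with no control on $Z_s$ needed.

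This is the paper's proof in contrapositive form. There $\lambda_0=\langle u,\cdot\rangle$ with $u\in U$, the orthogonal complement inside $\Delta^\perp$ of the span $V_0$ of $\pi([\Delta,\Delta])$. The test vector lies in $V_0$, and the covectors $X$ and $X+u$ (your $s=0$ and $s=1$) give $[\Delta,V_0]\subset\mathfrak h\oplus\Delta\oplus V_0$ directly. Generation then finishes the argument.
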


In \cite{ChenNN}, Chen, Nikolayevsky and Nikonorov classified the compact, simply connected
Riemannian geodesic orbit manifolds with simple isotropy group (see \cite[Thms.~1--2]{ChenNN}).
Our second main result extends their classification to the sub-Riemannian setting.

\begin{theorem}\label{thm:main}
Let $(M,\mathcal{D},Q)=(G/H,\Delta,\langle\cdot,\cdot\rangle_\Delta)$ be a compact, connected,
simply connected homogeneous sub-Riemannian manifold, where the action of $G$ is almost effective
and $H$ is a connected simple Lie group. Then every proper sub-Riemannian $G$-GO structure
belongs to one of the following two families, and every structure listed is GO and bracket
generating.
\begin{enumerate}\renewcommand{\labelenumi}{\upshape(\Alph{enumi})}
\item Take one of the simple-group structures in Tables~\ref{tab:two} and~\ref{tab:rem},
optionally in product with a compact simply connected semisimple group carrying any positive
bi-invariant Riemannian metric.
\item Let $K$ be a compact simply connected semisimple Lie group, $H\subset K$ a connected simple
subgroup that is not normal, and $b$ any positive bi-invariant metric on $K$. On $K$ take the
left-invariant structure
\[ \Delta=\mathfrak{h}^{\perp_b}\subset\mathfrak{k},\qquad Q=b|_{\Delta}, \]
where $\mathfrak{h}^{\perp_b}$ is the $b$-orthogonal complement of the Lie algebra $\mathfrak{h}$
of $H$ in the Lie algebra $\mathfrak{k}$ of $K$. Its acting group is $K\times H$ by left and right
translations, with isotropy group $\diag H$. Its corank is $\dim H$.
\end{enumerate}
\end{theorem}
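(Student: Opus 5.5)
The plan is to reduce the sub-Riemannian GO condition to an algebraic criterion and then run a case analysis parallel to that of Chen--Nikolayevsky--Nikonorov. The criterion I would use is the sub-Riemannian analogue of the geodesic lemma from \cite{YAD}. Fix an $\mathrm{Ad}(H)$-invariant inner product on $\mathfrak{g}$ that extends $\langle\cdot,\cdot\rangle_\Delta$, with $\mathfrak{m}=\Delta\oplus\Delta^{\perp}$. Then the structure is GO if and only if for every covector, identified with $X+Y$ where $X\in\Delta$ and $Y\in\Delta^\perp$, there is $Z\in\mathfrak{h}$ such that
\[ [Z+X+Y,\,X+Y]_{\mathfrak{m}}=0 \]
after the appropriate projection. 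In particular, taking $Y=0$ shows that the Riemannian-type condition holds on $\Delta$.

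First I would apply Theorem~\ref{thm:twostep}, which gives $\mathfrak{g}=\mathfrak{h}+\Delta+[\Delta,\Delta]$. Together with compactness and simple connectivity, this lets us write $G=G_1\times\cdots\times G_k$ with each $G_i$ simple, and it shows that $\Delta$ meets every simple factor nontrivially. Next, since $H$ is simple, its projection to each $\mathfrak{g}_i$ is either zero or injective. The factors on which $\mathfrak{h}$ projects to zero contribute $\mathfrak{g}_i\subset\mathfrak{m}$, and I would show that the GO condition forces $\Delta\supset\mathfrak{g}_i$ with a bi-invariant metric there. These are the trivial product factors in (A).

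This leaves the factors into which $\mathfrak{h}$ embeds. I would then show, using the ``commuting'' GO conditions as in \cite{ChenNN}, that $\mathfrak{h}$ embeds nontrivially in at most two simple factors counted appropriately, and that at most one of these can be non-diagonal-like. This splits the analysis into two cases.
\begin{enumerate}\renewcommand{\labelenumi}{\upshape(\roman{enumi})}
\item The first case is $\mathfrak{h}\subset\mathfrak{g}_1$ simple. Here I would decompose $\mathfrak{m}$ into $\mathrm{Ad}(H)$-irreducibles. Testing the criterion on pairs of modules lets me run through the Riemannian classification of GO spaces with simple isotropy, together with the list of irreducible summands, to produce Tables~\ref{tab:two} and~\ref{tab:rem}. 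The structures in these tables arise by dropping a module from $\Delta$ while keeping bracket generation.
\item The second case is $\mathfrak{h}$ embedded diagonally in $\mathfrak{k}\oplus\mathfrak{h}$, which gives $G=K\times H$ and $M=K$. Here I would show that $\Delta$ must equal $\mathfrak{h}^{\perp_b}$ and that the metric must be bi-invariant. This yields family (B).
\end{enumerate}

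For the converse I would verify that family (B) is GO directly. Given a covector $X+Y$ with $X\in\mathfrak{h}^{\perp}$ and $Y\in\mathfrak{h}$, the vector $(X+Y,\,Y)\in\mathfrak{k}\oplus\mathfrak{h}$ is a geodesic vector, because the right action by $\exp(tY)$ compensates the vertical part. Bracket generation for (B) follows because $H$ is not normal, so $[\mathfrak{h}^\perp,\mathfrak{h}^\perp]$ together with $\mathfrak{h}^\perp$ spans $\mathfrak{k}$. The table entries I would check case by case with the criterion.

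I expect the main obstacle to be case (i). There one must rule out GO structures in which $\Delta$ is a proper sum of modules that is not among the listed ones. When modules are equivalent, the metric and the splitting have moduli, and I would need to exploit the GO equation with $Y\neq0$ to force the relevant $\mathrm{Ad}(H)$-equivariant intertwiners to vanish.
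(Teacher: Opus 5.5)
Your proposal has a genuine gap in the necessity direction: it never connects the sub-Riemannian GO condition to the Riemannian classification of \cite{ChenNN} that you plan to ``run through''.

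\textbf{The missing reduction.} Specializing to $Y=0$ only constrains covectors supported on $\Delta$. It does not produce a positive definite $G$-GO metric, so Chen--Nikolayevsky--Nikonorov cannot be invoked. ``Testing the criterion on pairs of modules'' would then mean redoing their entire analysis in a degenerate setting, which you do not carry out. Your criterion is also misstated: $[Z+X+Y,X+Y]_{\alg{m}}=0$ is correct only when the inner product is $\mathrm{ad}$-invariant and $Q=B|_\Delta$. In general the velocity of the covector $p$ is $Ap$, not its $\Delta$-component. The missing idea is the regularization of Lemma~\ref{lem:reg}. Take a bi-invariant $B$ and the $B$-self-adjoint nonnegative cometric $A$ with $\Delta=\imd A$. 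The GO equation then reads $[p,Ap+Z]=0$ for all $p\in\alg{m}$, and since $[p,p]=0$ it is unchanged under $A\mapsto A+tI$. Hence $A$ is GO if and only if the Riemannian metric $(A+tI)^{-1}$ is GO. The proper structures are therefore exactly the rank-deficient nonnegative $A$ whose shifts lie in the GO families of \cite{ChenNN}. The simple case then reduces to reading off boundary coefficients row by row and checking bracket generation.

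\textbf{The semisimple case.} Here your reduction is not only unproved but contradicted by family (B) itself.
\begin{itemize}
\item In Example~\ref{ex:three} there are three active factors. There $\alg{h}+\Delta=\{(u+x,u-x,u)\}$ meets no simple factor, which also refutes your claim that two-step generation makes $\Delta$ meet every factor.
\item Taking $K=G_1\times G_2$ with $H$ properly embedded in both gives two proper active factors plus a full one. This violates your ``at most one non-diagonal-like'' factor claim, even though it belongs to (B).
\end{itemize}
What singles out (B) is not a count of factors. Two things must be shown.
\begin{enumerate}
\item GO forces the compression form $A_c=P_{\alg{m}}(C|_{\alg{m}})$ with $C|_{\alg{g}_i}=c_i\Idm$. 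This follows from regularization plus the naturally reductive conclusion of \cite{ChenNN} when $r\ge2$.
\item Nonnegativity, properness and bracket generation then force exactly one negative coefficient, located at a \emph{full} index $j$ with $\alg{g}_j\cong\alg{h}$. They also force $\sum_{i\in J}c_i^{-1}=0$ and ($P\neq\varnothing$ or $r\ge3$); see Lemma~\ref{lem:quadform} and Theorem~\ref{thm:criterion}.
\end{enumerate}
The existence of this full factor is what gives $G=K\times H$, $M=K$ and $b=\sum_{i\neq j}c_i^{-1}B_i$. For instance, if all active factors are proper, every bracket-generating GO cometric is positive definite, so no proper structure exists. Your sentence ``I would show that $\Delta$ must equal $\alg{h}^{\perp_b}$ and the metric must be bi-invariant'' has no mechanism behind it.

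Your sufficiency checks for (B) are fine: $(X+Y,Y)$ is a geodesic vector, and bracket generation follows from non-normality.
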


In the last case (10) of Table~\ref{tab:rem}, $L_x$ and $R_x$ denote left and right multiplication
on the real division algebra  of octonions $\OO$.

\begin{table}[htbp]
\centering
\caption{Two-summand cases. In every row $\Delta=\alg{m}_1$ and $A=\beta\Idm|_{\alg{m}_1}$,
$\beta>0$. The other summand is $\alg{m}_2=\alg{m}\ominus\alg{m}_1$. Row numbers agree with
\cite[Table~1]{ChenNN}.}
\label{tab:two}
\begin{tabular}{@{}l l p{2.7in}@{}}
\toprule
Row & $G/H$ & Horizontal subspace $\alg{m}_1$\\
\midrule
(1) & $\mathrm{Spin}(9)/\mathrm{Spin}(7)$ & $\alg{so}(9)\ominus\alg{so}(8)$ (dimension 8)\\
(2) & $\mathrm{Spin}(10)/\mathrm{Spin}(7)$ & $\alg{so}(10)\ominus\alg{so}(8)$ (dimension 17)\\
(3) & $\mathrm{Spin}(11)/\mathrm{Spin}(7)$ & $\alg{so}(11)\ominus\alg{so}(8)$ (dimension 27)\\
(4) & $E_6/\mathrm{Spin}(10)$ & $\alg{e}_6\ominus(\alg{so}(10)\oplus\RR)$ (dimension 32)\\
(5) & $\mathrm{SU}(n{+}p)/\mathrm{SU}(n)$, $n\ge2$, $1\le p\le n-1$ &
$\alg{su}(p)\oplus\CC^{p\times n}$, where $\CC^{p\times n}$ denotes the off-diagonal block (real
dimension $p^2-1+2np$)\\
(6$_2$) & $\mathrm{Spin}(2n{+}1)/\mathrm{SU}(n)$, $n$ even, $n\ge4$ &
$\alg{so}(2n+1)\ominus\alg{so}(2n)\simeq\CC^n$ (real dimension $2n$)\\
(7) & $\mathrm{Spin}(4n{+}2)/\mathrm{SU}(2n{+}1)$, $n\ge2$ &
$\alg{so}(4n+2)\ominus\alg{u}(2n+1)$ (dimension $2n(2n+1)$)\\
(8) & $\mathrm{Sp}(n{+}1)/\mathrm{Sp}(n)$, $n\ge1$ &
$\alg{sp}(n+1)\ominus(\alg{sp}(n)\oplus\alg{sp}(1))\simeq\HH^n$\\
(11) & $\mathrm{Spin}(9)/G_2$ & $\alg{so}(9)\ominus\alg{so}(7)$ (dimension 15)\\
\bottomrule
\end{tabular}
\end{table}

\begin{table}[htbp]
\centering
\caption{The remaining simple-group cases. All parameter inequalities are part of the
classification.}
\label{tab:rem}
\begin{tabular}{@{}l l p{2.7in}@{}}
\toprule
Row & $G/H$ & Summands and cometric\\
\midrule
(6$_1$) & $\mathrm{Spin}(2n{+}1)/\mathrm{SU}(n)$, $n$ odd, $n\ge3$ &
$\alg{m}_1=\alg{so}(2n{+}1)\ominus\alg{so}(2n)$, $\alg{m}_2=\alg{so}(2n)\ominus\alg{u}(n)$,
$\alg{m}_3=\alg{u}(n)\ominus\alg{su}(n)$. The coefficients are $(\beta,0,\beta/n)$, $\beta>0$;
thus $\Delta=\alg{m}_1\oplus\alg{m}_3$.\\[4pt]
(9) & $\mathrm{SU}(2n{+}1)/\mathrm{Sp}(n)$, $n\ge2$ &
$\alg{m}_1=\alg{su}(2n{+}1)\ominus\alg{u}(2n)$, $\alg{m}_2=\alg{su}(2n)\ominus\alg{sp}(n)$,
$\alg{m}_3=\alg{u}(2n)\ominus\alg{su}(2n)$. The coefficients satisfy $\beta_1>0$,
$\beta_2,\beta_3\ge0$, $\beta_2\beta_3=0$.\\[4pt]
(10) & $\mathrm{Spin}(8)/G_2$ &
Identify $\alg{so}(8)=\mathrm{Der}(\OO)\oplus L_{\mathrm{Im}\,\OO}\oplus R_{\mathrm{Im}\,\OO}$ and
put $V_{a,b}=\{aL_x+bR_x:x\in\mathrm{Im}\,\OO\}$. Then $A=\beta\Idm|_{V_{a,b}}$, $\beta>0$, where
$[a:b]\in\RR P^1$ and $(a+b)(2a-b)(a-2b)\neq0$.\\
\bottomrule
\end{tabular}
\end{table}

\section{The geodesic lemma and the proof of Theorem~\ref{thm:twostep}}\label{sec:geod}

Throughout this section, $(G/H,\Delta,\langle\cdot,\cdot\rangle_\Delta)$ is a connected
homogeneous sub-Riemannian manifold. Since $H$ is compact, $\langle\cdot,\cdot\rangle_\Delta$
extends to an $\mathrm{Ad}(H)$-invariant inner product $\langle\cdot,\cdot\rangle$ on
$\alg{m}$; such an extension is called a Riemannian extension of
$\langle\cdot,\cdot\rangle_\Delta$, and $\Delta^{\perp}$ denotes the
$\langle\cdot,\cdot\rangle$-orthogonal complement of $\Delta$ in $\alg{m}$. For $X,Y\in\alg{g}$
we write $[X,Y]_{\alg{m}}$ for the projection of the Lie bracket $[X,Y]$ to $\alg{m}$ along
$\alg{h}$.
We now recall the  geodesic lemma and its covector form.
The cotangent-lift formulation below follows T\'oth \cite{Toth} and Podobryaev \cite{Podobryaev},
and was cast in the following algebraic form by Yan--An--Deng \cite{YAD}; it is the
sub-Riemannian analogue of the Kowalski--Vanhecke geodesic lemma \cite{KowalskiVanhecke}.

\begin{lemma}[Geodesic lemma]\label{lem:geod}
Let $(G/H,\Delta,\langle\cdot,\cdot\rangle_\Delta)$ be a connected homogeneous sub-Riemannian
manifold, let $\langle\cdot,\cdot\rangle$ be an $\mathrm{Ad}(H)$-invariant Riemannian extension of
$\langle\cdot,\cdot\rangle_\Delta$ on $\alg{m}$, and let $\Delta^{\perp}$ be the
$\langle\cdot,\cdot\rangle$-orthogonal complement of $\Delta$ in $\alg{m}$. Then
$(G/H,\Delta,\langle\cdot,\cdot\rangle_\Delta)$ is geodesic orbit with respect to $G$ if and only
if, for every $X\in\Delta$ and every $Y\in\Delta^{\perp}$, there exists $Z\in\alg{h}$ such that
\begin{equation}\label{eq:geodlemma}
\langle X+Y,[X+Z,W]_{\alg{m}}\rangle=0\qquad\text{for all }W\in\alg{g}.
\end{equation}
In this case the normal geodesic through the base point $eH$ is of the form
$\exp t(X+Z)\cdot eH$, $t\in\RR$.
\end{lemma}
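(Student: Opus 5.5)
The plan is to translate the GO condition of Definition~\ref{def:GO} into left-invariant Hamiltonian dynamics on $T^*(G/H)$ and then read off the algebraic criterion \eqref{eq:geodlemma}. The starting point is the standard identification of $T^*_{eH}(G/H)$ with $\alg{m}^*\cong(\alg{h})^{0}\subset\alg{g}^*$. Using the Riemannian extension $\langle\cdot,\cdot\rangle$, we further identify this space with $\alg{m}$. Under this identification a covector $\lambda$ corresponds to a vector $X+Y$ with $X\in\Delta$ and $Y\in\Delta^\perp$. Its restriction to $\mathcal{D}_{eH}=\Delta$ is represented by $X$, so $H(\lambda)=\tfrac12\langle X,X\rangle$, and the normal geodesic starting with covector $\lambda$ has initial velocity $X$.

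Next we compute how the one-parameter subgroups act. For $V\in\alg{g}$, the cotangent lift of the flow $\exp(tV)$ is the Hamiltonian flow of the momentum function $\mu_V(\lambda)=\lambda(V^*)$, where $V^*$ denotes the fundamental vector field. The orbit of $\lambda$ under this lift is a reparametrized normal extremal exactly when the Hamiltonian vector fields of $H$ and $\mu_V$ agree at $\lambda$ (and hence along the orbit, by $G$-invariance of $H$). For this we must have $V\equiv X \pmod{\alg{h}}$, so we write $V=X+Z$ with $Z\in\alg{h}$, which matches the projection to the base.

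To get the fibre condition we use the $G$-equivariant moment map $T^*(G/H)\to\alg{g}^*$, which sends $\lambda$ to $\xi=\langle X+Y,(\cdot)_{\alg{m}}\rangle$. The normal Hamiltonian flow pulls back to a left-invariant flow on $T^*G$ restricted to $\{\xi|_{\alg{h}}=0\}$. In left trivialization, the Euler--Arnold equation reads
\[
\dot\xi=\operatorname{ad}^*_{X}\xi+(\text{an }\alg{h}\text{-correction}),
\]
and the momentum curve of the orbit $\exp t(X+Z)$ is $\operatorname{Ad}^*_{\exp t(X+Z)}$-generated. The two curves coincide exactly when $\operatorname{ad}^*_{X+Z}\xi$ vanishes modulo the allowed $\alg{h}$-gauge. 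Unwinding this gives
\[
\xi([X+Z,W])=\langle X+Y,[X+Z,W]_{\alg{m}}\rangle=0\quad\text{for all }W\in\alg{g},
\]
which is \eqref{eq:geodlemma}.

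With this equivalence in hand the lemma follows. The manifold is GO precisely when, for every covector at $eH$, i.e.\ for every pair $(X,Y)$, such a $Z$ exists. Homogeneity transports the condition from $eH$ to every other point, and the geodesic is then $\exp t(X+Z)\cdot eH$.

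The main obstacle is the middle step: correctly identifying the Hamiltonian vector field of $H$ on the reduced space $T^*G/\!/H$, and keeping track of the $\alg{h}$-freedom. The cleanest route I would try first is to work on $T^*G\cong G\times\alg{g}^*$. There the condition for $(g,\xi)$ to be a relative equilibrium is $\operatorname{ad}^*_{dH(\xi)}\xi=0$ modulo the $H$-action, and the $\alg{h}$-component of the generator supplies exactly the freedom encoded by $Z$. Alternatively, since the paper attributes this lemma to \cite{YAD}, one may simply cite that reference, together with T\'oth \cite{Toth} and Podobryaev \cite{Podobryaev}.
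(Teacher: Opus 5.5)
The paper does not prove this lemma. It quotes it from \cite{YAD}, following \cite{Toth} and \cite{Podobryaev}, so your closing suggestion to cite those sources is exactly what the paper does. Your own derivation has the right skeleton: relative equilibria of a $G$-invariant Hamiltonian, with the base velocity forcing $V=X+Z$, $Z\in\alg{h}$. However, the step you flag as the main obstacle does not work as written.

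You compare two curves. The first is the left-trivialized (body) momentum, which obeys the Euler--Arnold equation $\dot\xi=\xi\circ\mathrm{ad}_{\mathcal{A}\xi}$; no $\alg{h}$-correction is needed if you extend the Hamiltonian by $\tfrac12 Q^*(\xi|_\Delta,\xi|_\Delta)$. The second is $\xi\circ\mathrm{Ad}_{\exp(-t(X+Z))}$, which is the value of the \emph{spatial} momentum map $J$ along the orbit. These live in different frames. Along a normal extremal $J$ is constant by Noether's theorem. In the body frame, the orbit $\exp t(X+Z)\cdot\lambda$, after regauging its lift by $\exp(-tZ)$, has momentum $\xi\circ\mathrm{Ad}_{\exp(-tZ)}$, which is generated by $Z$ alone. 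Equating your two curves literally gives a relation between $\xi\circ\mathrm{ad}_X$ and $\xi\circ\mathrm{ad}_{X+Z}$. It does not give the vanishing of $\xi\circ\mathrm{ad}_{X+Z}$ required by \eqref{eq:geodlemma}. The phrase ``modulo the allowed $\alg{h}$-gauge'' hides this mismatch. The converse direction, that the algebraic condition really makes the orbit a normal extremal, is not argued at all.

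The repair is short if you work with $J$ throughout.

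\emph{Necessity.} $J$ is constant along normal extremals, and $J(\exp(tV)\cdot\lambda)=J(\lambda)\circ\mathrm{Ad}_{\exp(-tV)}$. So GO forces $J(\lambda)\circ\mathrm{ad}_V=0$ on all of $\alg{g}$. With $V=X+Z$ and $J(\lambda)=\langle X+Y,(\cdot)_{\alg{m}}\rangle$, this is precisely \eqref{eq:geodlemma}, with no residual gauge.

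\emph{Sufficiency.} By transitivity the fundamental vector fields span each $T_qM$, so $J$ is injective on each fibre $T^*_qM$. Hence $(d\pi,dJ)$ is injective on $T_\lambda(T^*M)$. Now compare the Hamiltonian vector field $\vec{H}(\lambda)$ of \eqref{eq:ham} with $\vec{\mu}_V(\lambda)$. Both project to $X$. Both are annihilated by $dJ$: the first by Noether, the second by the hypothesis $J(\lambda)\circ\mathrm{ad}_V=0$. So they coincide, and your $G$-invariance argument propagates the equality along the orbit.

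Equivalently, in the body frame you can check directly that $g(t)=\exp(t(X+Z))\exp(-tZ)$ and $\xi(t)=\xi\circ\mathrm{Ad}_{\exp(-tZ)}$ satisfy $g^{-1}\dot g=\mathcal{A}\xi(t)$ and $\dot\xi(t)=\xi(t)\circ\mathrm{ad}_{\mathcal{A}\xi(t)}$. This uses $\xi\circ\mathrm{ad}_{X+Z}=0$ and the $\mathrm{Ad}(H)$-equivariance of $\mathcal{A}$.
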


We next present a covector reformulation of \eqref{eq:geodlemma}, which is better suited to the
proofs in Section~\ref{sec:compact}. Let
\[ \alg{h}^0:=\{\lambda\in\alg{g}^*:\lambda|_{\alg{h}}=0\} \]
be the annihilator of $\alg{h}$ in $\alg{g}^*$, identified with the cotangent space
$T^*_{eH}(G/H)$ at the base point. Define the linear cometric map $\mathcal{A}:\alg{h}^0\to\Delta$ by
\begin{equation}\label{eq:cometric}
Q(\mathcal{A}\lambda,X)=\lambda(X),\qquad X\in\Delta,
\end{equation}
so that $A\lambda$ is the horizontal velocity of the normal extremal with initial covector
$\lambda$. The Hamiltonian \eqref{eq:ham} restricts to the quadratic form
\[ H(\lambda)=\tfrac12\,\lambda(\mathcal{A}\lambda)=\tfrac12\,Q(\mathcal{A}\lambda,\mathcal{A}\lambda),\qquad
\lambda\in\alg{h}^0. \]

\begin{lemma}\label{lem:covector}
$(G/H,\Delta,\langle\cdot,\cdot\rangle_\Delta)$ is geodesic orbit with respect to $G$ if and only
if, for every $\lambda\in\alg{h}^0$, there exists $Z\in\alg{h}$ such that
\begin{equation}\label{eq:covector}
\lambda\big([\mathcal{A}\lambda+Z,W]\big)=0\qquad\text{for all }W\in\alg{g}.
\end{equation}
\end{lemma}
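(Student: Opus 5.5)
This is a direct translation of Lemma~\ref{lem:geod} through the identification of $\alg{h}^0$ with $\alg{m}^*$, and then with $\alg{m}$ via the Riemannian extension $\langle\cdot,\cdot\rangle$. Fix such an extension. Given $\lambda\in\alg{h}^0$, its restriction to $\alg{m}$ is represented by a unique vector $V_\lambda\in\alg{m}$ with
\[
\lambda(W)=\langle V_\lambda, W_{\alg{m}}\rangle \qquad\text{for all } W\in\alg{g},
\]
where $W_{\alg{m}}$ is the projection along $\alg{h}$. The formula holds on all of $\alg{g}$ because $\lambda$ kills $\alg{h}$. Decompose $V_\lambda=X+Y$ with $X\in\Delta$ and $Y\in\Delta^\perp$. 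Conversely, every pair $(X,Y)\in\Delta\times\Delta^\perp$ arises from exactly one $\lambda\in\alg{h}^0$, namely $\lambda(W)=\langle X+Y,W_{\alg{m}}\rangle$. So $\lambda\mapsto(X,Y)$ is a linear bijection $\alg{h}^0\to\Delta\oplus\Delta^\perp$.

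The key step is to identify $\mathcal{A}\lambda$ with $X$. By \eqref{eq:cometric}, for every $X'\in\Delta$ we have
\[
Q(\mathcal{A}\lambda,X')=\lambda(X')=\langle X+Y,X'\rangle=\langle X,X'\rangle_\Delta,
\]
since $Y\perp\Delta$ and $\langle\cdot,\cdot\rangle$ restricts to $\langle\cdot,\cdot\rangle_\Delta=Q$ on $\Delta$. Nondegeneracy of $Q$ on $\Delta$ then gives $\mathcal{A}\lambda=X$. Consequently, for any $Z\in\alg{h}$ and $W\in\alg{g}$,
\[
\lambda\big([\mathcal{A}\lambda+Z,W]\big)=\langle X+Y,[X+Z,W]_{\alg{m}}\rangle .
\]
Condition \eqref{eq:covector} for $\lambda$ is therefore literally condition \eqref{eq:geodlemma} for the corresponding pair $(X,Y)$, with the same $Z$.

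Since the correspondence $\lambda\leftrightarrow(X,Y)$ is bijective, "for every $\lambda$ there is a $Z$" is equivalent to "for every $X\in\Delta$ and $Y\in\Delta^\perp$ there is a $Z$". Lemma~\ref{lem:geod} then yields the equivalence with the GO property. There is no real obstacle here. The only points needing care are that the identification of $\alg{h}^0$ with $\alg{m}^*$ uses the projection along $\alg{h}$, matching the $[\cdot,\cdot]_{\alg{m}}$ in \eqref{eq:geodlemma}, and that the choice of Riemannian extension is irrelevant, since $\mathcal{A}$ is defined intrinsically and Lemma~\ref{lem:geod} holds for any extension.
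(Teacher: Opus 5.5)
Your proposal is correct and uses the same argument as the paper: represent $\lambda\in\alg{h}^0$ as $\langle X+Y,\cdot\rangle$ via the Riemannian extension, identify $X=\mathcal{A}\lambda$, and observe that \eqref{eq:covector} and \eqref{eq:geodlemma} coincide with the same $Z$. You also make explicit two points the paper leaves implicit, namely the verification of $\mathcal{A}\lambda=X$ from \eqref{eq:cometric} and the bijectivity of $\lambda\mapsto(X,Y)$.
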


\begin{proof}
Writing $\lambda\in\alg{h}^0$ as $\lambda=\langle X+Y,\cdot\rangle$ with $X=\mathcal{A}\lambda\in\Delta$
its horizontal part and $Y\in\Delta^{\perp}$ its vertical part, equation \eqref{eq:geodlemma} is
equivalent to \eqref{eq:covector}.
\end{proof}

\begin{proof}[Proof of Theorem~\ref{thm:twostep}]
Since the sub-Riemannian structure is homogeneous, it suffices to verify \eqref{eq:twostep} at the
base point $p=eH$; the passage from $\alg{g}=\alg{h}+\Delta+[\Delta,\Delta]$ to
$\mathcal{D}_p+[\mathcal{D},\mathcal{D}]_p=T_pM$ for arbitrary $p$ is the
homogeneous-distribution correspondence of Berestovskii \cite{Berestovskii}, followed by
translation by $G$.

Let $\pi:\alg{g}\to\Delta^{\perp}$ be the natural projection along $\alg{h}\oplus\Delta$. Define
\[ V:=\operatorname{span}\{\pi([X_1,X_2]):X_1,X_2\in\Delta\},\qquad
U:=V^{\perp}\cap\Delta^{\perp}. \]
Both $V$ and $U$ are $\mathrm{Ad}(H)$-invariant. We claim that
\begin{equation}\label{eq:claim}
[\Delta,V]\subset\alg{h}\oplus\Delta\oplus V.
\end{equation}
Fix $X\in\Delta$, $v\in V$ and $u\in U$. By Lemma~\ref{lem:geod}, there exists $Z\in\alg{h}$ such
that
\[ \langle X+u,[X+Z,W]_{\alg{m}}\rangle=0\qquad\forall\,W\in\alg{g}. \]
Taking $W=v$ gives
\[ 0=\langle X+u,[X+Z,v]_{\alg{m}}\rangle=\langle X+u,[X,v]_{\alg{m}}\rangle
+\langle X+u,[Z,v]_{\alg{m}}\rangle. \]
Since $[\alg{h},V]\subset V\subset\alg{m}$, we have $[Z,v]=[Z,v]_{\alg{m}}\in V$. Hence
$\langle X+u,[Z,v]_{\alg{m}}\rangle=0$, and we obtain
\begin{equation}\label{eq:step8}
\langle X+u,[X,v]_{\alg{m}}\rangle=0.
\end{equation}
This identity holds for every $u\in U$, including $u=0$. Subtracting the case $u=0$ from
\eqref{eq:step8} yields
\[ \langle u,[X,v]_{\alg{m}}\rangle=0,\qquad\forall\,u\in U. \]
Thus $[X,v]_{\alg{m}}$ has no $U$-component. Since
$\alg{m}=\Delta\oplus\Delta^{\perp}=\Delta\oplus V\oplus U$, this means
$[X,v]_{\alg{m}}\in\Delta\oplus V$, and therefore $[X,v]\in\alg{h}\oplus\Delta\oplus V$, proving
\eqref{eq:claim}. Set
\[ F:=\alg{h}\oplus\Delta\oplus V. \]
By construction,
\begin{equation}\label{eq:Feq}
F=\alg{h}+\Delta+[\Delta,\Delta].
\end{equation}
Indeed, $[\Delta,\Delta]\subset\alg{h}\oplus\Delta\oplus V=F$ and
$\alg{h}+\Delta+[\Delta,\Delta]\subset F$, while conversely $V\subset[\Delta,\Delta]+\alg{h}+\Delta$,
so that $F\subset\alg{h}+\Delta+[\Delta,\Delta]$. Note also that
\begin{equation}\label{eq:Finv}
[\alg{h}\oplus\Delta,F]\subset F.
\end{equation}
Now $F$ contains $\alg{h}\oplus\Delta$, and we prove that $F$ contains every iterated Lie bracket
of elements of $\alg{h}\oplus\Delta$. By the Jacobi identity and antisymmetry, every such iterated
Lie bracket is a linear combination of left-nested brackets
$[y_1,[y_2,[\cdots,[y_{k-1},y_k]\cdots]]]$ with $y_i\in\alg{h}\oplus\Delta$. We argue by induction
on $k$. For $k=1$ we have $y_1\in\alg{h}\oplus\Delta\subset F$. For $k\ge2$, the induction
hypothesis gives $[y_2,[\cdots,[y_{k-1},y_k]\cdots]]\in F$, and since $y_1\in\alg{h}\oplus\Delta$,
equation \eqref{eq:Finv} gives $[y_1,[y_2,[\cdots]]]\in F$. Hence all iterated Lie brackets lie in
$F$, and so $F=\alg{g}$. By \eqref{eq:Feq} we obtain
$\alg{g}=\alg{h}+\Delta+[\Delta,\Delta]$, which is \eqref{eq:twostep}. The final assertion follows
from the Goh condition \eqref{eq:goh}.
\end{proof}

We conclude this section by describing the nilradical $\alg{n}$ of $\alg{g}$.

\begin{lemma}\label{lem:abelian}
Suppose that $(G/H,\Delta,\langle\cdot,\cdot\rangle_\Delta)$ is geodesic orbit with respect to
$G$. Every abelian ideal $\alg{a}$ of $\alg{g}$ has zero intersection with $\alg{h}$. Every
$\mathrm{Ad}(H)$-invariant positive definite inner product on $\alg{a}$ is
$\mathrm{ad}(\alg{g})$-invariant. In particular, $[\alg{n},\alg{a}]=0$.
\end{lemma}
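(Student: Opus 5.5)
The plan is to extract from Lemma~\ref{lem:covector} a quadratic identity on $\alg{a}$ by testing \eqref{eq:covector} only with $W\in\alg{a}$. From that identity we get skew-symmetry of $\mathrm{ad}(\alg{g})$ on $\alg{a}$, and the other two assertions follow from it.

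\emph{Setup.} Put $\alg{b}:=\alg{a}\cap\alg{h}$, an $\mathrm{Ad}(H)$-invariant subspace of $\alg{a}$. Fix an $\mathrm{Ad}(H)$-invariant inner product $(\cdot,\cdot)$ on $\alg{a}$ and let $\alg{a}_1$ be the $(\cdot,\cdot)$-orthogonal complement of $\alg{b}$ in $\alg{a}$. Since $H$ is compact, we may choose the reductive complement $\alg{m}$ so that $\alg{a}_1\subset\alg{m}$. The geodesic orbit property does not depend on this choice, because Lemma~\ref{lem:covector} is phrased intrinsically on $\alg{h}^0$.

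\emph{Skew-symmetry.} Take $\lambda\in\alg{h}^0$ and $W\in\alg{a}$. Since $\alg{a}$ is an ideal, $[\mathcal{A}\lambda+Z,W]\in\alg{a}$. Let $w_\lambda\in\alg{a}_1$ represent $\lambda|_{\alg{a}}$, so that $\lambda(U)=(w_\lambda,U)$ for $U\in\alg{a}$; this uses $\lambda|_{\alg{b}}=0$. Then \eqref{eq:covector} gives
\[ (w_\lambda,[\mathcal{A}\lambda+Z,W])=0\qquad\text{for all }W\in\alg{a}. \]
Put $W=w_\lambda$. The term with $Z$ drops out because $\mathrm{ad}(Z)$ is $(\cdot,\cdot)$-skew on $\alg{a}$, by $\mathrm{Ad}(H)$-invariance. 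We are left with
\[ (w_\lambda,[\mathcal{A}\lambda,w_\lambda])=0\qquad\text{for all }\lambda\in\alg{h}^0. \]
Now $\lambda\mapsto\mathcal{A}\lambda$ and $\lambda\mapsto w_\lambda$ are linear, and the identity is cubic in $\lambda$. Replace $\lambda$ by $\lambda+s\mu$ and take the coefficient of $s$ (and of $s^2$). This polarization, combined with the freedom to choose $\lambda$ with prescribed $\mathcal{A}\lambda=X\in\Delta$ and prescribed value on $\alg{a}_1$ modulo $\Delta$, should yield
\[ (w,[X,w])=0\qquad\text{for all }X\in\Delta,\ w\in\alg{a}_1. \]

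\emph{Main obstacle.} The difficulty is that $\lambda|_{\alg{a}_1}$ and $\mathcal{A}\lambda$ are coupled when $\alg{a}_1$ meets $\Delta$ non-orthogonally. My first attempt is to split $\lambda=\lambda_X+\mu$ with $\mu$ vanishing on $\Delta$, so that $\mathcal{A}\mu=0$, and extract the mixed terms. If that is insufficient, I will decompose $\alg{a}_1$ into its $\Delta$-part and $\Delta^\perp$-part, which is legitimate since $\alg{a}_1$ is $\mathrm{Ad}(H)$-invariant, and treat the two parts separately.

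\emph{Handling $\alg{b}$ and extending to $\alg{g}$.} Elements of $\alg{b}$ act trivially on $\alg{a}$, since $\alg{a}$ is abelian. Using this together with $\lambda|_{\alg{b}}=0$, I will show that $\mathrm{ad}(X)$ preserves the decomposition $\alg{a}=\alg{b}\oplus\alg{a}_1$ and is skew on it for every $X\in\Delta$. Since $\mathrm{ad}(\alg{h})$ is also skew, and skew endomorphisms form a Lie algebra, the bracket-generating property of $\alg{h}\oplus\Delta$ then gives that $\mathrm{ad}(\alg{g})$ is skew on $\alg{a}$.

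\emph{$\alg{a}\cap\alg{h}=0$.} Because $\Delta$ and $\alg{h}$ both preserve $\alg{b}$ and generate $\alg{g}$, $\alg{b}$ is an ideal of $\alg{g}$ contained in $\alg{h}$. Almost effectiveness then forces $\alg{b}=0$. With $\alg{b}=0$, the inner product $(\cdot,\cdot)$ was arbitrary among $\mathrm{Ad}(H)$-invariant ones, which proves the second claim.

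\emph{$[\alg{n},\alg{a}]=0$.} For $Y\in\alg{n}$, $\mathrm{ad}(Y)|_{\alg{a}}$ is nilpotent, because $\alg{a}\subset\alg{n}$ (abelian ideals are nilpotent ideals), and it is skew by the previous step. A nilpotent skew endomorphism vanishes, so $[\alg{n},\alg{a}]=0$.
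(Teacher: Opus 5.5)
\textbf{The decisive step is not supplied.} Your identity $(w_\lambda,[\mathcal{A}\lambda,w_\lambda])=0$ for all $\lambda\in\alg{h}^0$ is the right starting point; it is exactly what the paper extracts from Lemma~\ref{lem:covector} by testing with $W\in\alg{a}$. But the passage to $(w,[X,w])=0$ for all $X\in\Delta$ is only asserted (``should yield''), and that passage is the whole content of the lemma. Polarizing in $\lambda$ does not by itself decouple $w_\lambda$ from $\mathcal{A}\lambda$. The splitting you propose, with $\mu|_\Delta=0$, directly handles only those $w$ that are orthogonal to $\alg{a}\cap(\alg{h}+\Delta)$.

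The missing idea is to freeze $w$ and move $\lambda$ inside the fibre $\{\lambda\in\alg{h}^0:\lambda|_{\alg{a}}=(w,\cdot)\}$. The direction space of this fibre is the annihilator of $\alg{h}+\alg{a}$, and along it the identity $(w,[\mathcal{A}\lambda,w])=0$ is affine in $\lambda$. The paper chooses the particular direction $\eta(X):=(w,[X,w])$. This $\eta$ kills $\alg{a}$ because $\alg{a}$ is abelian, and kills $\alg{h}$ by $\mathrm{Ad}(H)$-invariance. Along $\lambda_0+t\eta$ the identity reads $\eta(\mathcal{A}\lambda_0)+t\,\eta(\mathcal{A}\eta)=0$ for all $t$. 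Hence $Q(\mathcal{A}\eta,\mathcal{A}\eta)=\eta(\mathcal{A}\eta)=0$, i.e.\ $\eta|_\Delta=0$, which is precisely $(w,[X,w])=0$ for $X\in\Delta$.

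\textbf{Your treatment of $\alg{b}=\alg{a}\cap\alg{h}$ is circular.} Every $\lambda\in\alg{h}^0$ vanishes on $\alg{b}$, so the geodesic orbit condition carries no information about $\alg{b}$-components. You therefore cannot extract from it that $\mathrm{ad}(X)$, $X\in\Delta$, preserves $\alg{b}$. Indeed, for $b\in\alg{b}$ one has $[X,b]=-[b,X]\in\alg{a}\cap\alg{m}$, and $\alg{a}\cap\alg{m}\cap\alg{b}=0$. So `$\mathrm{ad}(X)$ preserves $\alg{b}$' already means $[\Delta,\alg{b}]=0$, which is what has to be shown. Moreover, skewness of the compression of $\mathrm{ad}(X)$ to $\alg{a}_1$ is not skewness of $\mathrm{ad}(X)|_{\alg{a}}$, and the latter is what your generation argument needs.

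The paper disposes of $\alg{b}$ first, without using the geodesic orbit property. For $X\in\alg{a}\cap\alg{h}$ one has $(\mathrm{ad}\,X)^2=0$ on $\alg{g}$, while $\mathrm{ad}\,X$ is skew for an $\mathrm{Ad}(H)$-invariant inner product on $\alg{g}$. Hence $\mathrm{ad}\,X=0$, and effectiveness gives $X=0$. You already use both ingredients (nilpotent plus skew) in your last step; applied to $\alg{b}$ they give $\alg{b}=0$. Then $\alg{a}_1=\alg{a}$, the restriction $\alg{h}^0\to\alg{a}^*$ is onto, and the fibre argument above applies to every $w\in\alg{a}$. Your closing steps (generation, then nilpotent plus skew) then go through as written.
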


\begin{proof}
For $X\in\alg{a}$, one has $(\mathrm{ad}\,X)^2=0$ because $\alg{a}$ is an abelian ideal. If also
$X\in\alg{h}$, then the compactness of $H$ provides an $\mathrm{Ad}(H)$-invariant positive
definite inner product on $\alg{g}$ for which $\mathrm{ad}\,X$ is skew-adjoint. Hence
$\mathrm{ad}\,X$ is both nilpotent and skew-adjoint, so $\mathrm{ad}\,X=0$. Effectiveness of the
$G$-action implies $\alg{h}\cap\alg{z}(\alg{g})=0$, where $\alg{z}(\alg{g})$ is the center, so
$X=0$. This proves $\alg{a}\cap\alg{h}=0$.

Fix an $\mathrm{Ad}(H)$-invariant inner product $b$ on $\alg{a}$. Take $v\in\alg{a}$ and let
$\mu=b(v,\cdot)\in\alg{a}^*$. Since $\alg{a}\cap\alg{h}=0$, the restriction map
$\alg{h}^0\to\alg{a}^*$ is surjective. Hence we may choose $\lambda_0\in\alg{h}^0$ extending
$\mu$, that is, $\lambda_0|_{\alg{a}}=\mu$. Define
\[ \eta(X)=\mu([X,v])\ (X\in\alg{g}),\qquad \lambda_t=\lambda_0+t\eta\ (t\in\RR). \]
The ideal $\alg{a}$ is abelian, so $\eta|_{\alg{a}}=0$. Also $\eta|_{\alg{h}}=0$, since $b$ is
$\mathrm{Ad}(H)$-invariant. Thus $\eta\in\alg{h}^0$ and $\lambda_t\in\alg{h}^0$ with
$\lambda_t|_{\alg{a}}=\mu$ for all $t$.

Apply the covector form \eqref{eq:covector} to $\lambda_t$, there exists
$Z\in\alg{h}$ such that
\[ 0=\lambda_t\big([\mathcal{A}\lambda_t+Z,v]\big). \]
Both $[\mathcal{A}\lambda_t,v]$ and $[Z,v]$ lie in the ideal $\alg{a}$. Hence
$\lambda_t([Z,v])=\mu([Z,v])=b(v,[Z,v])=0$. Consequently
\begin{equation}\label{eq:lem23}
0=\lambda_t\big([\mathcal{A}\lambda_t,v]\big).
\end{equation}
Now $[\mathcal{A}\lambda_t,v]\in\alg{a}$ and $\lambda_0|_{\alg{a}}=\mu$, so
$\lambda_0([\mathcal{A}\lambda_t,v])=\mu([\mathcal{A}\lambda_t,v])=\eta(\mathcal{A}\lambda_t)$. Also
$\eta([\mathcal{A}\lambda_t,v])=\mu([[\mathcal{A}\lambda_t,v],v])=0$ because $(\mathrm{ad}\,v)^2=0$ on $\alg{a}$.
Using $\lambda_t=\lambda_0+t\eta$ and the linearity of $\mathcal{A}$, equation \eqref{eq:lem23} gives
\[ 0=\lambda_t\big([\mathcal{A}\lambda_t,v]\big)=\eta(\mathcal{A}\lambda_t)
=\eta(\mathcal{A}\lambda_0)+t\,\eta(\mathcal{A}\eta)
\quad(t\in\RR). \]
Taking $t=0$ yields $\eta(\mathcal{A}\lambda_0)=0$, and then $t\,\eta(\mathcal{A}\eta)=0$ for all $t$, so
$\eta(\mathcal{A}\eta)=0$. But $\mathcal{A}\eta\in\Delta$ is the horizontal velocity of the covector $\eta$, and by
\eqref{eq:cometric},
\[ \eta(\mathcal{A}\eta)=Q(\mathcal{A}\eta,\mathcal{A}\eta)=Q^{*}(\eta|_{\Delta},\eta|_{\Delta})=0. \]
Thus $\eta|_{\Delta}=0$, that is, $b(v,[X,v])=\eta(X)=0$ for all $X\in\Delta$ and
$v\in\alg{a}$. Polarizing in $v$, we get $b(v,[X,w])+b(w,[X,v])=0$, so
$\mathrm{ad}\,X|_{\alg{a}}$ is skew-adjoint with respect to $b$ for every $X\in\Delta$. The same
holds for $X\in\alg{h}$ by the $\mathrm{Ad}(H)$-invariance of $b$. Since $\alg{h}+\Delta$ generates
$\alg{g}$ and the skew-adjoint endomorphisms of $\alg{a}$ form a Lie algebra, it follows that
$\mathrm{ad}\,X|_{\alg{a}}$ is skew-adjoint for all $X\in\alg{g}$; that is, $b$ is
$\mathrm{ad}(\alg{g})$-invariant.

Finally, if $X\in\alg{n}$, then $\mathrm{ad}\,X$ on $\alg{g}$ is nilpotent. Its restriction to
$\alg{a}$ is both nilpotent and skew-adjoint, hence zero. Thus $[\alg{n},\alg{a}]=0$.
\end{proof}

\begin{theorem}\label{thm:nilradical}
Suppose that $(G/H,\Delta,\langle\cdot,\cdot\rangle_\Delta)$ is geodesic orbit with respect to
$G$. Then $[\alg{n},[\alg{n},\alg{n}]]=0$.
\end{theorem}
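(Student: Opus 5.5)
Apply Lemma~\ref{lem:abelian} to suitable abelian ideals of $\alg{g}$ constructed from the nilradical $\alg{n}$. The natural candidates are the terms of the lower central series $\alg{n}^{(1)}=\alg{n}$, $\alg{n}^{(k+1)}=[\alg{n},\alg{n}^{(k)}]$ and the center $\alg{z}(\alg{n})$. Each of these is a characteristic ideal of the ideal $\alg{n}$, hence an ideal of $\alg{g}$, since derivations of $\alg{g}$ restricted to $\alg{n}$ preserve characteristic ideals.

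The argument runs as an induction downward through the central series. If $\alg{n}$ is abelian there is nothing to prove. Otherwise let $c\ge 2$ be the nilpotency step, so $\alg{n}^{(c)}\neq0$ and $\alg{n}^{(c+1)}=0$. The last term $\alg{a}=\alg{n}^{(c)}$ is central in $\alg{n}$, hence an abelian ideal of $\alg{g}$. Lemma~\ref{lem:abelian} then gives an $\mathrm{ad}(\alg{g})$-invariant positive definite inner product $b$ on $\alg{a}$.

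Suppose $c\ge3$ and aim for a contradiction. Write $\alg{a}=[\alg{n},\alg{n}^{(c-1)}]$, so every element of $\alg{a}$ is a sum of brackets $[x,y]$ with $x\in\alg{n}$ and $y\in\alg{n}^{(c-1)}$. Choose $a\in\alg{a}$ and a bracket $[x,y]$ with $b(a,[x,y])\neq0$. Because $b$ is $\mathrm{ad}(\alg g)$-invariant on $\alg a$, we want to move $\mathrm{ad}\,x$ across, but $y\notin\alg a$, so invariance does not apply directly. The fix is to use instead the ideal $\alg{a}'=\alg{n}^{(c-1)}$ when it is abelian. Since $c\ge3$, $2(c-1)\ge c+1$, so $[\alg{n}^{(c-1)},\alg{n}^{(c-1)}]\subset\alg{n}^{(2c-2)}=0$, and $\alg{a}'$ is indeed an abelian ideal of $\alg{g}$. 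Lemma~\ref{lem:abelian} gives $[\alg{n},\alg{a}']=0$, that is, $\alg{n}^{(c)}=0$, a contradiction. Hence $c\le2$, which is exactly $[\alg{n},[\alg{n},\alg{n}]]=0$.

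The key point is the inclusion $[\alg{n}^{(i)},\alg{n}^{(j)}]\subset\alg{n}^{(i+j)}$, which is standard and follows by induction from the Jacobi identity. It is the only step needing care. Once it is in hand, the whole theorem reduces to the final statement $[\alg{n},\alg{a}]=0$ of Lemma~\ref{lem:abelian}, applied to the abelian ideal $\alg{n}^{(c-1)}$ when $c\ge3$.
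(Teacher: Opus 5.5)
Your argument is correct and is essentially the paper's proof. For nilpotency class $c\ge3$, the term $\alg{n}^{(c-1)}$ is an abelian ideal of $\alg{g}$ because $2c-2\ge c+1$, and then Lemma~\ref{lem:abelian} gives $[\alg{n},\alg{n}^{(c-1)}]=0$, which is a contradiction. The preliminary detour through $\alg{n}^{(c)}$ and its $\mathrm{ad}(\alg{g})$-invariant inner product $b$ plays no role and can simply be deleted.
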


\begin{proof}
Write the lower central series $C^1\alg{n}=\alg{n}$, $C^{j+1}\alg{n}=[\alg{n},C^j\alg{n}]$.
Suppose the nilpotency class were $r\ge3$, that is, $C^{r+1}\alg{n}=0$ but $C^r\alg{n}\neq0$.
Then the ideal $\alg{a}=C^{r-1}\alg{n}$ would be abelian, since
\[ [C^{r-1}\alg{n},C^{r-1}\alg{n}]\subset C^{2r-2}\alg{n}=0. \]
It is an ideal of $\alg{g}$ because the terms of the lower central series are characteristic in
$\alg{n}$ and $\alg{n}$ is an ideal of $\alg{g}$. Lemma~\ref{lem:abelian} gives
$[\alg{n},\alg{a}]=0$, contradicting $[\alg{n},C^{r-1}\alg{n}]=C^r\alg{n}\neq0$. Hence $r\le2$,
that is, $[\alg{n},[\alg{n},\alg{n}]]=0$.
\end{proof}

The following example shows that $\Delta\cap\alg{n}$ may vanish.

\begin{example}\label{ex:su2}
Let $\alg{g}=\alg{su}(2)\oplus\RR T\oplus\RR S$, with
$\alg{su}(2)=\operatorname{span}\{E_1,E_2,K\}$ and cyclic bracket $[E_1,E_2]=K$. Make these five
vectors orthonormal for a bi-invariant inner product $B$, and set
\[ \alg{h}=\RR(K+T),\qquad V=K-T,\qquad L=V+S,\qquad W=V-2S, \]
\[ \Delta=\operatorname{span}\{E_1,E_2,L\},\qquad Q=B|_{\Delta}. \]
This is a compact homogeneous model $(\mathrm{SU}(2)\times T^2)/S^1$, with compatible periods for
the diagonal circle and a finite ineffective kernel removed. In $\alg{m}=\alg{h}^{\perp_B}$,
write $\xi=X+aL+bW$ with $X\in\operatorname{span}\{E_1,E_2\}$. Its cometric image is $X+aL$, and a
direct computation gives
\[ [X+aL+b(K+T),\xi]=0, \]
so the structure is GO. The bracket $[E_1,E_2]=K$ also shows
$\alg{h}+\Delta+[\Delta,\Delta]=\alg{g}$. But
\[ \Delta\cap\alg{su}(2)=\operatorname{span}\{E_1,E_2\},\qquad \Delta\cap\alg{n}=0, \]
so their sum is smaller than $\Delta$: the horizontal space need not split along the Levi
decomposition.
\end{example}

\section{Compact cometrics and the proof of Theorem~\ref{thm:main}}\label{sec:compact}

In this section we suppose that $G$ is compact semisimple. Fix a positive definite
$\mathrm{Ad}(G)$-invariant inner product $B$ on $\alg{g}$ and put $\alg{m}=\alg{h}^{\perp_B}$. An
invariant horizontal metric is equivalently specified by an $H$-equivariant, $B$-self-adjoint,
nonnegative operator $A:\alg{m}\to\alg{m}$ satisfying
\[ \Delta=\imd A,\qquad Q(Ap,Aq)=B(p,Aq). \]
This is well defined on the image and positive definite there. Conversely, every
invariant sub-Riemannian structure $(\Delta,Q)$ determines one such $A$ (extend $Q$ by zero on the
orthogonal complement $\Delta^{\perp}$). The operator must additionally satisfy bracket generation.

\begin{lemma}[Regularization]\label{lem:reg}
The cotangent GO condition is
\begin{equation}\label{eq:GO}
\text{for every }p\in\alg{m}\text{ there is }Z\in\alg{h}\text{ such that }[p,Ap+Z]=0.
\end{equation}
For every $t>0$, this holds for $A$ if and only if it holds for $A+tI$. Hence $(\Delta,Q)$ is GO
if and only if the positive definite Riemannian metric with metric operator $(A+tI)^{-1}$ is
$G$-GO.
\end{lemma}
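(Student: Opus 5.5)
First I derive \eqref{eq:GO} from Lemma~\ref{lem:covector}. Identify $\alg{h}^0$ with $\alg{m}$ through $B$, writing $\lambda=B(p,\cdot)$ with $p\in\alg{m}$. By \eqref{eq:cometric} and the defining relation $Q(Ap,Aq)=B(p,Aq)$, the cometric image of $\lambda$ is $\mathcal{A}\lambda=Ap$. Indeed, for $X=Aq\in\Delta$ we have $Q(Ap,X)=B(p,Aq)=\lambda(X)$. Condition \eqref{eq:covector} now reads
\[ B(p,[Ap+Z,W])=0\qquad\text{for all }W\in\alg{g}. \]
By $\mathrm{ad}$-invariance of $B$ this equals $B([p,Ap+Z],W)$. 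Since $B$ is nondegenerate on $\alg{g}$, the condition is equivalent to $[p,Ap+Z]=0$. This gives \eqref{eq:GO}.

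For the invariance under $A\mapsto A+tI$, note that $[p,(A+tI)p+Z]=[p,Ap+Z]+t[p,p]=[p,Ap+Z]$. So the same $Z$ works for both operators, for every $p$, and the equivalence is immediate.

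It remains to connect $A+tI$ with the Riemannian GO property. Since $A$ is $B$-self-adjoint, nonnegative and $H$-equivariant, the operator $A_t:=A+tI$ is positive definite, self-adjoint and $H$-equivariant. Hence $g_t(X,Y):=B(A_t^{-1}X,Y)$ is an $\mathrm{Ad}(H)$-invariant Riemannian metric on $\alg{m}$. Consider it as the degenerate case $\Delta=\alg{m}$ of the setup: with $Q=g_t$, the relation $Q(A_tp,A_tq)=B(A_t^{-1}A_tp,A_tq)=B(p,A_tq)$ holds, so $A_t$ is exactly the metric operator attached to the Riemannian structure $g_t$. Lemma~\ref{lem:covector} applies verbatim with $\Delta=\alg{m}$, since its proof never uses bracket generation beyond the setup, and the Riemannian case is the classical geodesic lemma. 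So $g_t$ is $G$-GO if and only if for every $p$ there is $Z\in\alg{h}$ with $[p,A_tp+Z]=0$. By the previous step this is equivalent to \eqref{eq:GO} for $A$, hence to the GO property of $(\Delta,Q)$.

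The only delicate point is the bookkeeping of the identification $\lambda\leftrightarrow p$, in particular checking that $\mathcal{A}\lambda=Ap$ lands in $\Delta=\imd A$ with the stated $Q$. I would verify this directly from $Q(Ap,Aq)=B(p,Aq)$, which also shows that $Q$ is well defined on $\imd A$. That holds because $Ap=0$ forces $B(p,Aq)=B(Ap,q)=0$ by self-adjointness.
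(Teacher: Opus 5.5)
Your argument is correct and follows the paper's proof. You identify $\alg{h}^0$ with $\alg{m}$ via $B$, use $\mathrm{ad}$-invariance and nondegeneracy of $B$ to turn the covector condition into $[p,Ap+Z]=0$, and note that $t[p,p]=0$, so the same $Z$ works for $A+tI$. Your explicit checks that $\mathcal{A}\lambda=Ap$ and that $A+tI$ is the cometric of $g_t(X,Y)=B((A+tI)^{-1}X,Y)$ fill in details the paper leaves implicit.
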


\begin{proof}
Represent $\lambda\in\alg{h}^0$ by $p\in\alg{m}$ using $B$, that is, $\lambda=B(p,\cdot)$. Then
by the ad-invariance of $B$, we obtain for all $W\in\alg{g}$
\[ \lambda([A\lambda+Z,W])=B(p,[Ap+Z,W])=B([p,Ap+Z],W).\]
It follows that  \eqref{eq:covector} is equivalent to \eqref{eq:GO}, since $B$ is nondegenerate.

Finally,
\[ [p,(A+tI)p+Z]=[p,Ap+t\,p+Z]=[p,Ap+Z]+t\,[p,p]=[p,Ap+Z], \]
so the same isotropy vector $Z$ works on both sides for every real $t$. Since $A+tI$ is positive
definite for $t>0$, it defines a Riemannian metric (with cometric $A+tI$ and metric operator
$(A+tI)^{-1}$) whose GO condition is exactly \eqref{eq:GO} for $A+tI$.
\end{proof}

\begin{remark}\label{rem:reg}
Regularization changes the horizontal metric, so it is not a Riemannian extension in the sense of
Section~\ref{sec:geod}.
\end{remark}

We fix the following notation. Write $\alg{g}=\bigoplus_{i=1}^{N}\alg{g}_i$ as a sum of compact
simple ideals and let $\rho_i:\alg{h}\to\alg{g}_i$ be the isotropy projection. Partition the
indices into
\[ I=\{i:\rho_i=0\},\qquad P=\{i:0\neq\rho_i(\alg{h})\subsetneq\alg{g}_i\},\qquad
F=\{i:\rho_i(\alg{h})=\alg{g}_i\}, \]
and put $J=P\cup F$ and $r=|J|$. We call $I$ the inactive set, $P$ the proper active set, and $F$
the full set. Each active $\rho_i$ is injective, hence an isomorphism onto its image. Let
$B_{\alg{h}}$ be the negative Killing form of $\alg{h}$ and choose positive invariant forms $B_i$
on $\alg{g}_i$ such that $\rho_i^{*}B_i=B_{\alg{h}}$ for $i\in J$; on inactive factors choose any
positive invariant normalization. Define
\begin{equation}\label{eq:compression}
B=\sum_i B_i,\qquad \alg{m}=\alg{h}^{\perp_B},\qquad C|_{\alg{g}_i}=c_i\Idm,\qquad
A_c=P_{\alg{m}}(C|_{\alg{m}}).
\end{equation}
Inactive factors are full-rank Riemannian factors carrying bi-invariant metrics, since a
left-invariant metric on a compact semisimple Lie group is geodesic orbit need to be bi-invariant; such factors therefore
split off as direct product factors. Hence we may assume that $I=\varnothing$ without loss of
generality. There are two cases: $r=1$ and $r\ge2$, treated in Sections~\ref{subsec:simple}
and~\ref{subsec:semisimple}, respectively.

\subsection{The simple case: proof of Theorem~\ref{thm:main}(A)}\label{subsec:simple}

Here $r=1$, so $\alg{g}$ is simple. By Lemma~\ref{lem:reg}, for every $t>0$ the positive definite
operator $A+tI$ defines a $G$-GO Riemannian metric $\mu_t$ on $G/H$, with cometric $A+tI$ and
metric operator $(A+tI)^{-1}$. By the Riemannian classification of
Chen--Nikolayevsky--Nikonorov \cite[Thms.~1--2]{ChenNN}, a $G$-GO Riemannian metric on $G/H$ with
$\alg{g}$ and $\alg{h}$ simple is either naturally reductive or belongs to the exceptional
(non-naturally-reductive) list of \cite[Table~1]{ChenNN}. Since naturally reductive metrics on
$G/H$ are unique up to scale, a proper $G$-GO structure must come from the exceptional list.

We now fix a Lie algebra pair $(\alg{g},\alg{h})$ from \cite[Table~1]{ChenNN}, together with a
$B$-orthogonal decomposition
\[ \alg{g}=\alg{h}\oplus\alg{m}_1\oplus\cdots\oplus\alg{m}_s \]
into irreducible $\alg{h}$-submodules $\alg{m}_i$ of $\alg{m}$, and write the cometric as
$A=\bigoplus_i\beta_i\Idm|_{\alg{m}_i}$ with $\beta_i\ge0$. The horizontal space is then
$\Delta=\bigoplus_{\beta_i>0}\alg{m}_i$, and bracket generation means
$\alg{h}+\Delta+[\Delta,\Delta]=\alg{g}$. By Lemma~\ref{lem:reg}, the sub-Riemannian metric with
cometric $A$ is GO if and only if, for some $t>0$, the positive invariant Riemannian metric
$\sum_i\alpha_i\,B|_{\alg{m}_i}$ on $G/H$ is GO, where $\alpha_i=(\beta_i+t)^{-1}$. We check these
conditions case by case.

\textbf{Case 1:} the two-summand rows of Table~\ref{tab:two}.
Suppose $s=2$, so $\alg{m}=\alg{m}_1\oplus\alg{m}_2$. In every two-summand row of
\cite[Table~1]{ChenNN} the positive invariant GO metrics form the full two-parameter family: both
coefficients $\alpha_1,\alpha_2$ are arbitrary positive numbers, and every positive invariant
metric is GO. On the boundary we may set $\beta_1=0$ or $\beta_2=0$.

If $\beta_1=0$ and $\beta_2>0$, then $\Delta=\alg{m}_2$. In each row
$\alg{h}\oplus\alg{m}_2$ is the proper intermediate subalgebra indicated in
\cite[Table~1]{ChenNN}: for example, in row 1 one has
$\alg{h}\oplus\alg{m}_2=\alg{so}(8)\subsetneq\alg{so}(9)$, and the same pattern holds in every
row. Hence $\alg{h}+\Delta+[\Delta,\Delta]\subset\alg{h}\oplus\alg{m}_2\subsetneq\alg{g}$, and
bracket generation fails.

If $\beta_2=0$ and $\beta_1>0$, then $\Delta=\alg{m}_1$, and we claim that $\Delta$ is bracket
generating. Set $\alg{l}=\alg{h}\oplus\alg{m}_2$; this is a proper subalgebra of $\alg{g}$, and
$\alg{m}_1=\alg{l}^{\perp_B}$. Since $B$ is $\mathrm{ad}$-invariant, for $u\in\alg{l}$ and
$v,w\in\alg{m}_1$ we have
\[ B([u,v],w)=-B(v,[u,w])=0, \]
because $[u,w]\in\alg{l}$ and $v\perp\alg{l}$; hence $[\alg{l},\alg{m}_1]\subset\alg{m}_1$, that
is, $\alg{m}_1$ is $\alg{l}$-stable. Let $\alg{a}$ be the Lie algebra generated by $\alg{m}_1$.
Since the adjoint action of $\alg{l}$ is by derivations and stabilizes the generators,
$[\alg{l},\alg{a}]\subset\alg{a}$; and since $\alg{a}$ is a subalgebra containing $\alg{m}_1$,
also $[\alg{m}_1,\alg{a}]\subset\alg{a}$. Writing $\alg{g}=\alg{l}\oplus\alg{m}_1$, we conclude
$[\alg{g},\alg{a}]\subset\alg{a}$: the algebra $\alg{a}$ is a nonzero ideal of the simple algebra
$\alg{g}$, hence $\alg{a}=\alg{g}$. Thus $\alg{h}\oplus\alg{m}_1$ generates $\alg{g}$, that is,
the structure with $\Delta=\alg{m}_1$ and $A=\beta_1\Idm|_{\alg{m}_1}$, $\beta_1>0$, is bracket
generating and GO. This proves that the two-summand rows contribute exactly the structures of
Table~\ref{tab:two}, and nothing else.

\textbf{Case 2:} row $(6_{1})$ with $n$ odd.
Here $G/H=\mathrm{Spin}(2n+1)/\mathrm{SU}(n)$, $n\ge3$ odd, with $\alg{h}=\alg{su}(n)$ and
\[ \alg{m}_1=\alg{so}(2n{+}1)\ominus\alg{so}(2n),\qquad
\alg{m}_2=\alg{so}(2n)\ominus\alg{u}(n),\qquad
\alg{m}_3=\alg{u}(n)\ominus\alg{su}(n), \]
three summands, all contained in $\alg{so}(2n)$ except $\alg{m}_1$. The GO condition for the
positive metrics of this row, expressed in cometric coefficients, is the single relation
\begin{equation}\label{eq:row6}
n\beta_3=(n-1)\beta_2+\beta_1.
\end{equation}
Bracket generation requires $\beta_1>0$: if $\beta_1=0$, then
$\Delta\subset\alg{m}_2\oplus\alg{m}_3\subset\alg{so}(2n)$, and since also
$\alg{h}=\alg{su}(n)\subset\alg{so}(2n)$, we get
$\alg{h}+\Delta+[\Delta,\Delta]\subset\alg{so}(2n)\subsetneq\alg{so}(2n+1)$, so generation fails.
Properness then forces $\beta_2=0$: indeed, if $\beta_2>0$, then \eqref{eq:row6} gives
$\beta_3=\big((n-1)\beta_2+\beta_1\big)/n>0$, so all three coefficients are positive and $A$ is
positive definite (full rank), contradicting properness; note that $\beta_3=0$ is incompatible
with $\beta_1>0$, since it would give $\beta_1=-(n-1)\beta_2\le0$. With $\beta_2=0$,
\eqref{eq:row6} gives $\beta_3=\beta_1/n>0$, so the boundary structures of this row are exactly
$(\beta_1,0,\beta_1/n)$, $\beta_1>0$, as displayed in Table~\ref{tab:rem}. These conditions are
also sufficient, by Lemma~\ref{lem:reg}.

When $n=3$, the first two $\mathrm{SU}(3)$-modules $\alg{m}_1$ and $\alg{m}_2$ are equivalent; the
GO condition nevertheless forces the displayed splitting; see \cite[Sec.~5.2.3]{ChenNN}.

\textbf{Case 3:} row (9).
Here $G/H=\mathrm{SU}(2n+1)/\mathrm{Sp}(n)$, $n\ge2$, with $\alg{h}=\alg{sp}(n)$ and
\[ \alg{m}_1=\alg{su}(2n{+}1)\ominus\alg{u}(2n),\qquad
\alg{m}_2=\alg{su}(2n)\ominus\alg{sp}(n),\qquad
\alg{m}_3=\alg{u}(2n)\ominus\alg{su}(2n). \]
The positive coefficients of the source metrics are unrestricted: every positive invariant metric
on this space is GO. Bracket generation is equivalent to $\beta_1>0$. Indeed,
$\alg{h}+\alg{m}_2+\alg{m}_3=\alg{u}(2n)$ is a proper subalgebra of $\alg{su}(2n+1)$, and its
orthogonal complement is $\alg{m}_1$; if $\beta_1=0$, then $\Delta\subset\alg{u}(2n)$ and
generation fails, while if $\beta_1>0$, then $\Delta$ contains $\alg{m}_1$, which generates
$\alg{su}(2n+1)$ by the argument of Case 1. Properness is exactly $\beta_2\beta_3=0$: if
$\beta_2\beta_3>0$, then together with $\beta_1>0$ all three coefficients are positive and $A$ is
full rank; conversely, if $\beta_2\beta_3=0$, then $A$ is nonzero (since $\beta_1>0$) and
rank-deficient, hence proper.

\textbf{Case 4:} row (10).
Here $G/H=\mathrm{Spin}(8)/G_2$, and the two irreducible isotropy summands are equivalent
seven-dimensional $\alg{g}_2$-modules (they are interchanged by an outer automorphism of
$\alg{so}(8)$). For this reason the Riemannian classification \cite[Table~1]{ChenNN} permits every
positive invariant operator, so every nonnegative invariant operator is GO by Lemma~\ref{lem:reg}.
A proper nonzero one has rank seven, and we now determine its form and its bracket-generation
condition explicitly.

Identify $\alg{so}(8)=\mathrm{Der}(\OO)\oplus L_{\mathrm{Im}\,\OO}\oplus R_{\mathrm{Im}\,\OO}$, where
$\alg{h}=\alg{g}_2=\mathrm{Der}(\OO)$ and $L_x$, $R_x$ denote left and right multiplication by $x$ on
the division algebra of octonions $\OO$; the two isotropy summands are $L_{\mathrm{Im}\,\OO}$ and
$R_{\mathrm{Im}\,\OO}$, both isomorphic to the seven-dimensional irreducible $\alg{g}_2$-module.
By Schur's lemma, every $\alg{h}$-equivariant self-adjoint endomorphism of this irreducible module
is a scalar, so the $\alg{h}$-invariant seven-dimensional subspaces of
$L_{\mathrm{Im}\,\OO}\oplus R_{\mathrm{Im}\,\OO}$ are exactly the graphs
\[ V_{a,b}=\{aL_x+bR_x:x\in\mathrm{Im}\,\OO\},\qquad [a:b]\in\RR P^1, \]
and a proper nonzero invariant operator is $A=\beta\Idm|_{V_{a,b}}$ with $\beta>0$.

To check bracket generation, write $c=[x,y]=xy-yx$ (the commutator, which lies in
$\mathrm{Im}\,\OO$) and
\[ D_{x,y}(z)=[[x,y],z]-\tfrac13\big((xy)z-x(yz)\big); \]
this is an octonion derivation, hence lies in $\mathrm{Der}(\OO)=\alg{h}$; see \cite[Sec.~4.1]{Baez}.
Using alternativity one computes, for $x,y\in\mathrm{Im}\,\OO$,
\[ 3[aL_x+bR_x,aL_y+bR_y]=(a^2+2ab-2b^2)L_c+(2a^2-2ab-b^2)R_c+2(a^2+b^2-ab)D_{x,y}. \]
Hence, modulo $\alg{h}$, the bracket $[V_{a,b},V_{a,b}]$ is spanned by the vectors
$\alpha L_c+\beta R_c$ with $c$ ranging over $\mathrm{Im}\,\OO$, where $\alpha=a^2+2ab-2b^2$ and
$\beta=2a^2-2ab-b^2$. Since the commutators $[x,y]$ span $\mathrm{Im}\,\OO$, the spaces
$\{aL_x+bR_x\}$ and $\{\alpha L_x+\beta R_x\}$ together span
$L_{\mathrm{Im}\,\OO}\oplus R_{\mathrm{Im}\,\OO}$ exactly when the coefficient matrix
$\bigl(\begin{smallmatrix} a & b\\ \alpha & \beta \end{smallmatrix}\bigr)$ has rank two, that is,
when
\[ a\beta-b\alpha=a(2a^2-2ab-b^2)-b(a^2+2ab-2b^2)=(a+b)(2a-b)(a-2b)\neq0. \]
In this case $\alg{h}+V_{a,b}+[V_{a,b},V_{a,b}]=\alg{so}(8)$ and the structure is bracket
generating. If the determinant vanishes, then $(\alpha,\beta)$ is proportional to $(a,b)$, so
$\{\alpha L_x+\beta R_x\}=V_{a,b}$ and $[V_{a,b},V_{a,b}]\subset\alg{h}\oplus V_{a,b}$; thus
$\alg{h}+V_{a,b}$ is already a proper Lie subalgebra and generation fails. This proves the three
exclusions $[1:-1]$, $[1:2]$, $[2:1]$ of Table~\ref{tab:rem} and completes the analysis of row (10).
\qed

\subsection{The semisimple case: proof of Theorem~\ref{thm:main}(B)}\label{subsec:semisimple}

Here $r\ge2$, so $\alg{h}$ projects nontrivially onto at least two simple ideals. For $i\in P$,
write $\alg{g}_i=\alg{h}_i\oplus\alg{p}_i$, where $\alg{h}_i=\rho_i(\alg{h})$ and the sum is
$B_i$-orthogonal. Identify active copies of $\alg{h}$ using $\rho_i$. The orthogonal decomposition
is
\begin{equation}\label{eq:decomp}
\alg{m}=\bigoplus_{i\in P}\alg{p}_i\,\oplus\,\Bigl\{(z_i)_{i\in J}:\sum_{i\in J}z_i=0\Bigr\}.
\end{equation}
The quadratic form of $A_c$ is $c_iB_i$ on the summands $\alg{p}_i$ and
\[ q_c(z)=\sum_{i\in J}c_i|z_i|^2,\qquad \sum_{i\in J}z_i=0, \]
on the diagonal summand, where $|\cdot|$ is induced by $B_{\alg{h}}$. Consequently, on its image,
the sub-Riemannian metric is $Q=\big(\sum_i c_i^{-1}B_i\big)\big|_{\imd A_c}$.

\begin{lemma}\label{lem:compGO}
With notation as above, for any coefficients $c_i$ the sub-Riemannian metric defined by $A_c$ is
geodesic orbit.
\end{lemma}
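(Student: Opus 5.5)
The plan is to verify the cotangent GO condition \eqref{eq:GO} of Lemma~\ref{lem:reg} directly, by exhibiting the isotropy vector $Z$ explicitly. The key observation is that the operator $C$ of \eqref{eq:compression}, acting as $c_i\Idm$ on each simple ideal $\alg{g}_i$, commutes with $\mathrm{ad}(\alg{g})$ in the sense that $[p,Cp]=0$ for every $p\in\alg{g}$. Indeed, write $p=\sum_i p_i$ with $p_i\in\alg{g}_i$. Then
\[ [p,Cp]=\sum_{i,j}c_j[p_i,p_j]=\sum_i c_i[p_i,p_i]=0, \]
because distinct simple ideals commute.

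First, I will check that $A_c$ is a legitimate cometric operator in the sense of the beginning of this section. Let $P_{\alg{h}}$ and $P_{\alg{m}}$ be the $B$-orthogonal projections. Since $B=\sum_i B_i$ makes the ideals $\alg{g}_i$ mutually orthogonal, $C$ is $B$-self-adjoint. Since $C$ is a scalar on each ideal, it commutes with $\mathrm{Ad}(H)$. Both projections are $\mathrm{Ad}(H)$-equivariant, because $\alg{h}$ and $\alg{m}$ are $\mathrm{Ad}(H)$-invariant. Hence $A_c=P_{\alg{m}}C|_{\alg{m}}$ is $H$-equivariant and $B$-self-adjoint on $\alg{m}$, and its quadratic form is $B(Cp,p)=\sum_i c_iB_i(p_i,p_i)$. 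So $A_c$ is nonnegative when the $c_i\ge0$, and it defines the structure described after \eqref{eq:decomp}. (Bracket generation is a separate matter and is not needed for the GO assertion itself.)

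Second, the GO condition. Given $p\in\alg{m}$, decompose $Cp=P_{\alg{m}}(Cp)+P_{\alg{h}}(Cp)=A_cp+P_{\alg{h}}(Cp)$. Set $Z:=P_{\alg{h}}(Cp)\in\alg{h}$. Then $A_cp+Z=Cp$, and therefore
\[ [p,A_cp+Z]=[p,Cp]=0. \]
This is exactly \eqref{eq:GO}, so by Lemma~\ref{lem:reg} (equivalently Lemma~\ref{lem:covector}) the structure is geodesic orbit. The normal geodesic through $eH$ with initial covector $B(p,\cdot)$ is $\exp(tCp)\cdot eH$.

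I do not expect a genuine obstacle here; the argument is essentially one line. The only point needing care is bookkeeping. One must confirm that the $B$ used to identify $\alg{h}^0$ with $\alg{m}$ is the same $\mathrm{ad}$-invariant form $B=\sum_iB_i$ used to define $\alg{m}$ and the projections, so that the computation in the proof of Lemma~\ref{lem:reg} applies verbatim. The normalizations $\rho_i^*B_i=B_{\alg{h}}$ are irrelevant to GO, since any sum of positive invariant forms on the simple ideals is $\mathrm{ad}$-invariant.
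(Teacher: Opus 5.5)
Your proof is correct and matches the paper's argument: set $Z=P_{\alg{h}}Cp$, so that $A_cp+Z=Cp$, and use $[p,Cp]=0$ because $C$ is scalar on each simple ideal and distinct ideals commute. Your additional checks that $A_c$ is $\mathrm{Ad}(H)$-equivariant and $B$-self-adjoint are accurate supplementary bookkeeping that the paper leaves implicit.
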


\begin{proof}
For every $p\in\alg{m}$, take $Z=P_{\alg{h}}Cp$. Then
\[ A_cp+Z=Cp,\qquad [p,A_cp+Z]=[p,Cp]=0. \]
Thus every nonnegative cometric is GO whenever its image is bracket generating.
\end{proof}

\begin{lemma}\label{lem:form}
With notation as above, a nonnegative GO operator $A:\alg{m}\to\alg{m}$ must be of the form
\eqref{eq:compression}.
\end{lemma}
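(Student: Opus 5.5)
The plan is to reduce to the Riemannian case with Lemma~\ref{lem:reg}, apply the Chen--Nikolayevsky--Nikonorov result for non-simple $\alg{g}$, and subtract the shift. Let $A$ be a nonnegative GO operator. Fix any $t>0$. By Lemma~\ref{lem:reg}, the positive definite operator $A+tI$ is the cometric of a $G$-GO Riemannian metric on $G/H$. Here $G$ is compact semisimple and non-simple ($r\ge2$, $I=\varnothing$), and $H$ is simple and projects injectively into each active factor. In this regime I would invoke \cite[Thm.~2]{ChenNN} (the case $\alg{g}$ non-simple, $\alg{h}$ simple). Its content is that such a GO metric is naturally reductive with respect to the enlarged group $G\times N$. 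Concretely, it is the normal-homogeneous-type metric induced from the bi-invariant form $\sum_i c_i'^{-1}B_i$ on $\alg{g}$. In cometric terms, $A+tI=P_{\alg{m}}(C'|_{\alg{m}})=A_{c'}$ for some positive coefficients $c'_i$, where $C'|_{\alg{g}_i}=c'_i\Idm$.

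Granting this, the conclusion is a one-line shift. We have $P_{\alg{m}}(tI|_{\alg{m}})=tI$ on $\alg{m}$ and $C\mapsto A_c$ is linear in $c$. Hence
\[ A=A_{c'}-tI=P_{\alg{m}}\big((C'-tI)|_{\alg{m}}\big)=A_c,\qquad c_i:=c'_i-t. \]
This is exactly the form \eqref{eq:compression}. The coefficients $c_i$ may now be zero or negative individually. Nonnegativity of $A$, read off from the block description after \eqref{eq:decomp}, forces $c_i\ge0$ for $i\in P$ and $q_c\ge0$ on the diagonal summand. That constraint is recorded but is not needed for the form.

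The main obstacle is the middle step: checking that the Riemannian classification really delivers precisely the compression form. The isotropy representation on $\alg{m}$ in \eqref{eq:decomp} has many equivalent summands, since the diagonal part consists of $r-1$ copies of the adjoint module of $\alg{h}$, possibly also equivalent to pieces of some $\alg{p}_i$. An arbitrary invariant operator is therefore far from block-scalar a priori, and one must make sure the cited theorem excludes all of these off-diagonal intertwiners.

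If the citation does not match cleanly, I would argue directly from \eqref{eq:GO}. First, take $p$ in the diagonal summand, say $p=(z_i)$ with $\sum z_i=0$. The condition $[p,Ap+Z]=0$ splits into the factor-wise conditions $[z_i,(Ap+Z)_i]=0$ in each $\alg{g}_i$. Choosing $z_i$ regular in $\alg{h}$ and using Schur's lemma on the copies of $\alg{h}$, this should force $A$ restricted to the diagonal summand to be $P_{\alg{m}}$ of a diagonal scalar operator. Second, test with mixed vectors $p=x_i+z$, where $x_i\in\alg{p}_i$. This should show that $A$ maps $\alg{p}_i$ into $\alg{p}_i$ and acts there by the same scalar $c_i$ appearing in the diagonal part; that matching of scalars is the step that ties the two blocks together.
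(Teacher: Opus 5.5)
Your proposal follows the paper's proof: regularize via Lemma~\ref{lem:reg}, invoke \cite{ChenNN} to put $A+tI$ in the form \eqref{eq:compression}, and subtract $tI$ using linearity of $c\mapsto A_c$, a step the paper leaves implicit. One inaccuracy, harmless because the argument never uses it: the coefficients $c'_i$ need not be positive, since for the proper structures of Theorem~\ref{thm:criterion} the map $c\mapsto A_c$ is injective, so for $0<t<|c_j|$ the only representation of $A+tI$ has $c'_j=c_j+t<0$.
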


\begin{proof}
By Lemma~\ref{lem:reg}, for every $t>0$ the Riemannian metric with metric operator
$(A+tI)^{-1}$ is $G$-GO. Hence it is $G$-naturally reductive and has the form
\eqref{eq:compression}, by Theorems 1 and 2 of \cite{ChenNN}.
\end{proof}

\begin{lemma}\label{lem:quadform}
Let $V\neq0$ be a real inner product space and $r\ge2$. The form $\sum_{i=1}^{r}c_i|z_i|^2$ on
$\sum_i z_i=0$ is nonnegative if and only if either all $c_i\ge0$, or exactly one coefficient $c_j$
is negative, every other coefficient is positive, and
\[ c_j\ge-\Bigl(\sum_{i\neq j}c_i^{-1}\Bigr)^{-1}. \]
In the first case its kernel consists of vectors supported on the zero coefficients and having
coordinate sum zero. In the second case it is positive definite for strict inequality; at equality
its kernel is $\{(c_i^{-1}u)_i:u\in V\}$.
\end{lemma}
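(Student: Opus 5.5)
The plan is to reduce to a scalar minimization problem and then handle each sign pattern. Write $z=(z_1,\dots,z_r)$ with $z_i\in V$ and $\sum_i z_i=0$, and $q(z)=\sum_i c_i|z_i|^2$. Because the constraint and the form are both ``diagonal'' in $V$, I would fix an orthonormal basis of $V$. Then $q$ splits as a sum over coordinates of copies of the scalar form $\sum_i c_i t_i^2$ on the hyperplane $\sum_i t_i=0$ in $\RR^r$. So it suffices to treat $V=\RR$, and the kernel for general $V$ is obtained by tensoring the scalar kernel with $V$.

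\textbf{Case 1: all $c_i\ge0$.} Nonnegativity is immediate. If $q(z)=0$, then each term $c_i|z_i|^2$ vanishes, so $z_i=0$ whenever $c_i>0$. This gives the kernel description: vectors supported on the zero coefficients with coordinate sum zero.

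\textbf{Necessity when some coefficient is negative.} First suppose two coefficients $c_j,c_k$ are negative. Take $z_j=u$, $z_k=-u$ and all other $z_i=0$; then $q=(c_j+c_k)|u|^2<0$. Next suppose $c_j<0$ and some other coefficient $c_k=0$. The same test vector gives $q=c_j|u|^2<0$. Hence exactly one coefficient is negative and all the others are positive.

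\textbf{The case of exactly one negative coefficient $c_j$, others positive.} I would solve the constraint for $z_j=-\sum_{i\neq j}z_i$. For fixed $s=\sum_{i\ne j}z_i$, the Cauchy--Schwarz (Lagrange) inequality gives
\[ \sum_{i\neq j}c_i|z_i|^2\;\ge\;\Bigl(\sum_{i\neq j}c_i^{-1}\Bigr)^{-1}|s|^2, \]
with equality exactly when $z_i=c_i^{-1}w$ for a common $w\in V$. Writing $\sigma=\bigl(\sum_{i\ne j}c_i^{-1}\bigr)^{-1}$, this yields
\[ \min q\big|_{\sum_{i\ne j}z_i=s}=(\sigma+c_j)|s|^2 . \]
Therefore $q\ge0$ if and only if $c_j\ge-\sigma$. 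If the inequality is strict, $q(z)=0$ forces $s=0$ and then $z=0$, so the form is positive definite. At equality, $q(z)=0$ forces the equality case $z_i=c_i^{-1}w$ for $i\ne j$. Then
\[ z_j=-\Bigl(\sum_{i\ne j}c_i^{-1}\Bigr)w=-\sigma^{-1}w=c_j^{-1}w, \]
since $c_j=-\sigma$. So the kernel is $\{(c_i^{-1}u)_i\}$, and these vectors do satisfy $\sum_i z_i=0$.

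The only mildly delicate step is the equality analysis in the weighted Cauchy--Schwarz inequality. I would carry it out directly with the weights $\sqrt{c_i}$, writing $s=\sum_{i\ne j}(c_i^{-1/2})(c_i^{1/2}z_i)$. Everything else is bookkeeping.
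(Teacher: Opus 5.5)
Your proof is correct and is essentially the paper's argument. The test vectors $z_j=u$, $z_k=-u$ rule out two negative coefficients, or a negative one paired with a zero one; the all-nonnegative kernel is read off term by term; and the remaining case is the weighted Cauchy--Schwarz bound $\sum_{i\neq j}c_i|z_i|^2\ge\sigma|z_j|^2$, with equality exactly when $z_i=c_i^{-1}w$ (the preliminary reduction to $V=\RR$ is harmless but unnecessary, since that step already works directly in $V$).
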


\begin{proof}
Two negative coefficients, or one negative and one zero coefficient, give a negative value on an
opposite pair of nonzero coordinates. In the all-nonnegative case the assertion about the kernel
follows term by term. In the remaining case put $S=\sum_{i\neq j}c_i^{-1}>0$. Subject to
$\sum_{i\neq j}z_i=-z_j$, the weighted Cauchy--Schwarz inequality gives
\[ \sum_{i\neq j}c_i|z_i|^2\ge\frac{|z_j|^2}{S}, \]
with equality exactly when $z_i=-z_j/(Sc_i)$. Hence nonnegativity is equivalent to
$c_j+1/S\ge0$, and the equality case gives the stated kernel. For strict inequality, vanishing
requires $z_j=0$ and then every other coordinate zero.
\end{proof}

\begin{theorem}\label{thm:criterion}
Under the preceding hypotheses, the three properties
\[ A_c\ge0,\qquad \Delta=\imd A_c\neq\alg{m},\qquad \alg{g}=\alg{h}+\Delta+[\Delta,\Delta] \]
hold simultaneously if and only if the coefficients satisfy, for some $j\in F$,
\begin{equation}\label{eq:criterion}
c_i>0\ (i\neq j),\qquad c_j=-\Bigl(\sum_{i\in J\setminus\{j\}}c_i^{-1}\Bigr)^{-1}<0,\qquad
P\neq\varnothing\ \text{or}\ r\ge3.
\end{equation}
Moreover, the kernel is
\begin{equation}\label{eq:kernel}
\Ker A_c=\{n(u):u\in\alg{h}\},\qquad
n(u)_i=c_i^{-1}\rho_i(u),\quad i\in J.
\end{equation}
Consequently the corank is $\dim\alg{h}$ and the horizontal rank is
$\dim\alg{g}-2\dim\alg{h}$.
\end{theorem}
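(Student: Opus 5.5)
The plan is to analyze the three conditions blockwise, using the decomposition \eqref{eq:decomp}. On each $\alg{p}_i$ ($i\in P$) the operator $A_c$ acts as $c_i\Idm$. On the diagonal summand it is the form $q_c$, to which Lemma~\ref{lem:quadform} applies with $V=\alg{h}$. So $A_c\ge0$ means: $c_i\ge0$ for $i\in P$, and $q_c$ falls under one of the two alternatives of Lemma~\ref{lem:quadform}. If a negative coefficient $c_j$ occurs, then $j\notin P$, since otherwise $A_c$ would be negative on $\alg{p}_j\neq0$. Hence $j\in F$, which explains why the criterion singles out $j\in F$.

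\textbf{All coefficients nonnegative.} Let $Z=\{i:c_i=0\}$. By Lemma~\ref{lem:quadform}, $\Ker A_c=\bigoplus_{i\in Z\cap P}\alg{p}_i\oplus\{z \text{ supported on } Z,\ \sum z_i=0\}$. Properness forces $\Ker A_c\neq0$. I would then compute the $B$-orthogonal complement of $\Ker A_c$ in $\alg{g}$: inside $\bigoplus_{i\in Z}\alg{g}_i$ it is the diagonal copy of $\alg{h}$, and on the indices outside $Z$ it is unrestricted. Thus $\alg{h}+\Delta\subset\diag_Z(\alg{h})\oplus\bigoplus_{i\notin Z}\alg{g}_i$. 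This is a subalgebra, and it is proper exactly when the kernel is nonzero; the only potential exception, $Z=\{j\}$ with $j\in F$, has zero kernel. Hence bracket generation fails, and this case contributes nothing.

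\textbf{One negative coefficient $c_j$, $j\in F$.} If the inequality of Lemma~\ref{lem:quadform} is strict, $A_c$ is positive definite, contradicting properness. So equality holds, which is \eqref{eq:criterion}, and the kernel is $\{n(u)\}$ as in \eqref{eq:kernel}. This gives corank $\dim\alg{h}$ and rank $\dim\alg{m}-\dim\alg{h}=\dim\alg{g}-2\dim\alg{h}$. Since $B(z,n(u))=\sum_i c_i^{-1}B_{\alg{h}}(z_i,u)$, the space $\alg{h}+\Delta$ is the $B$-orthogonal complement of $n(\alg{h})$ in $\alg{g}$. It therefore contains every $\alg{p}_i$ together with all tuples $(z_i)$ satisfying $\sum c_i^{-1}z_i=0$.

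\textbf{Generation when $P=\varnothing$ and $r=2$.} Here $c_j=-c_k$, the constraint forces $z_1=z_2$, and so $\Delta=0$. Generation fails.

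\textbf{Generation when $P\neq\varnothing$.} Let $\mathfrak{s}$ be the subalgebra generated by $\alg{h}+\Delta$, and pick $i\in P$. Since $\alg{p}_i\subset\Delta$, the argument of Case~1 (an $\alg{h}_i$-stable complement of a proper subalgebra generates the simple ideal) gives $\alg{g}_i\subset\mathfrak{s}$. Modulo $\alg{g}_i$, the linear constraint on the $\alg{h}$-coordinates no longer binds, because the $i$-th coordinate is free. Hence $\mathfrak{s}$ contains every $\alg{h}_k$ and every $\alg{p}_k$, so $\mathfrak{s}=\alg{g}$.

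\textbf{Generation when $P=\varnothing$ and $r\ge3$.} This is the main obstacle. I would choose $k,l\neq j$, so that $c_k,c_l>0$. Then $\Delta+\alg{h}$ contains the vectors $(a,-\tfrac{c_l}{c_k}a)$ in $\alg{g}_k\oplus\alg{g}_l$, with coordinates $0$ elsewhere. Brackets of two such vectors give $([a,a'],(\tfrac{c_l}{c_k})^2[a,a'])$, and $[\alg{h},\alg{h}]=\alg{h}$. The ratio is negative for the generators and positive for the brackets, so together they span $\alg{g}_k\oplus\alg{g}_l$. Having this full ideal inside $\mathfrak{s}$ frees the constraint just as before, and $\mathfrak{s}=\alg{g}$.

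Finally, since $A_c$ is GO by Lemma~\ref{lem:compGO}, the proof of Theorem~\ref{thm:twostep} upgrades generation to $\alg{g}=\alg{h}+\Delta+[\Delta,\Delta]$; in any case, the explicit computations above already produce the needed vectors at bracket depth one.
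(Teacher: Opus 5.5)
Your argument is correct, but in the sufficiency direction it takes a different route from the paper.

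The paper never looks at the subalgebra generated by $\alg{h}+\Delta$. It uses $(\alg{h}+\Delta)^{\perp_B}=\Ker A_c=\{n(u)\}$ and shows directly that no nonzero $n(u)$ is $B$-orthogonal to $[\Delta,\Delta]$. For $P\neq\varnothing$ this holds because the $\alg{h}_k$-components of $[\alg{p}_k,\alg{p}_k]$ span a nonzero ideal of the simple algebra $\alg{h}_k$. For $P=\varnothing$, $r\ge3$, the paper chooses $a\neq0$ with $\sum_i a_i=\sum_i c_i^{-1}a_i=0$, so that $a\otimes x$ already lies in $\Delta$, and computes $B(n(u),[a\otimes x,a\otimes y])=\kappa\,B_{\alg{h}}(u,[x,y])$ with $\kappa>0$. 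This gives $\alg{g}=\alg{h}+\Delta+[\Delta,\Delta]$ at bracket depth one without invoking the GO property.

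You instead capture one ideal ($\alg{g}_i$ for $i\in P$ via the Case~1 argument, or $\alg{g}_k\oplus\alg{g}_l$ via $-t\neq t^2$) and then untangle the single linear constraint. This is more elementary, but it only shows that $\alg{h}+\Delta$ generates $\alg{g}$. So your appeal to Lemma~\ref{lem:compGO} and Theorem~\ref{thm:twostep} is genuinely needed. It is also legitimate: once $A_c\ge0$ and generation hold, the structure is an honest GO sub-Riemannian space.

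Your closing aside that the computations are already depth one is right for the $r\ge3$ branch, because $[\alg{h},\Delta]\subset\Delta$ gives $[\alg{h}+\Delta,\alg{h}+\Delta]\subset\alg{h}+\Delta+[\Delta,\Delta]$. It is not right for the $P\neq\varnothing$ branch as written. The Case~1 argument carries no depth bound, and what would be needed there is precisely the paper's observation that the $\alg{h}_i$-projection of $[\alg{p}_i,\alg{p}_i]$ is all of $\alg{h}_i$.

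On the necessity side, your description $(\Ker A_c)^{\perp_B}=\diag_Z(\alg{h})\oplus\bigoplus_{i\notin Z}\alg{g}_i$ disposes in one stroke of the two subcases the paper treats separately: a vanishing coefficient in $P$, and two vanishing full coefficients.
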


\begin{proof}
\emph{Necessity.} Nonnegativity and bracket generation force $c_i>0$ for every $i\in P$. Indeed, a
negative coefficient makes $A_c$ negative on the nonzero summand $\alg{p}_i$ in
\eqref{eq:decomp}. If such a coefficient is zero, the projection of $\alg{h}+\imd A_c$ to
$\alg{g}_i$ is contained in the proper subalgebra $\alg{h}_i$; brackets cannot generate
$\alg{g}_i$.

If all active coefficients are nonnegative, singularity requires at least two zero coefficients,
and their indices must be full. Choose distinct such indices $a,b$. For every $u\in\alg{h}$, the
vector with active coordinates $z_a=u$, $z_b=-u$ and all others zero belongs to $\Ker A_c$. Since
$A_c$ is self-adjoint, its image is orthogonal to this kernel, and therefore its $a$ and $b$
coordinates agree under the identifications with $\alg{h}$. The same is true of every element of
$\alg{h}$. The equality of these two coordinates defines a proper Lie subalgebra of $\alg{g}$,
contradicting bracket generation.

There is consequently one negative active coefficient, at some $j\in F$. Lemma~\ref{lem:quadform}
implies that every other active coefficient is positive; hence $c_i>0$ for all $i\neq j$. The
strict inequality of that lemma would make $A_c$ positive definite. Singularity therefore forces
\[ c_j=-1/S,\qquad S=\sum_{i\in J\setminus\{j\}}c_i^{-1},\qquad \sum_{i\in J}c_i^{-1}=0. \]
The kernel is precisely \eqref{eq:kernel}.

If $r=2$ and $P=\varnothing$, both active factors are full and the last summand in
\eqref{eq:decomp} is the entire kernel. Thus $\Delta$ has no active component. The projection of
$\alg{h}+\Delta$ onto the two active factors is diagonal, so it cannot generate their direct sum.
Necessity of the final condition in \eqref{eq:criterion} follows.

\emph{Sufficiency.} Assume \eqref{eq:criterion} holds. Lemma~\ref{lem:quadform} proves
nonnegativity, singularity and \eqref{eq:kernel}. Write $N=\Ker A_c$; since
$(\alg{h}+\Delta)^{\perp_B}=N$, it suffices to prove that a vector $n(u)\in N$ orthogonal to
$[\Delta,\Delta]$ must vanish.

First suppose $P\neq\varnothing$ and choose $k\in P$. We have $\alg{p}_k\subset\Delta$. The span
of the $\alg{h}_k$-projections of $[\alg{p}_k,\alg{p}_k]$ is an $\alg{h}_k$-ideal, by equivariance.
It is nonzero: otherwise invariance of $B_k$ would give
\[ B_k([U,X],Y)=B_k(U,[X,Y])=0\qquad(U\in\alg{h}_k,\ X,Y\in\alg{p}_k). \]
Because $[\alg{h}_k,\alg{p}_k]\subset\alg{p}_k$, this would imply $[\alg{h}_k,\alg{p}_k]=0$ and
make $\alg{h}_k$ a nonzero proper ideal of the simple $\alg{g}_k$. The span is therefore all of
$\alg{h}_k$. Orthogonality of $n(u)$ to these brackets says
\[ 0=B(n(u),[X,Y])=c_k^{-1}B_k(\rho_k(u),[X,Y])\qquad(X,Y\in\alg{p}_k), \]
which forces $u=0$.

Now suppose $P=\varnothing$, so $r\ge3$ and all active factors are full. Put $v_i=c_i^{-1}$ for
$i\in F$ and choose a nonzero real vector
\[ a\in W=\Bigl\{(a_i)_{i\in F}:\sum_i a_i=0,\ \sum_i v_ia_i=0\Bigr\}. \]
This is possible because $\dim W=r-2$; the vectors $(1)_i$ and $(v_i)_i$ are independent. For
every $x\in\alg{h}$, the vector $a\otimes x:=(a_i\rho_i(x))_{i\in F}$ lies in
$\Delta=\alg{m}\cap N^{\perp_B}$. The identities $\sum_i v_i=\sum_i v_ia_i=0$ give
\[ \kappa:=\sum_i v_ia_i^2=\sum_{i\neq j}v_i(a_i-a_j)^2>0. \]
Strict positivity follows because all $v_i$ for $i\neq j$ are positive; equality would make all
$a_i$ equal, and $\sum_i a_i=0$ would then imply $a=0$. For $x,y\in\alg{h}$,
\[ B\big(n(u),[a\otimes x,a\otimes y]\big)=\kappa B_{\alg{h}}(u,[x,y]). \]
Since $[\alg{h},\alg{h}]=\alg{h}$, orthogonality forces $u=0$. This proves
$\alg{h}+\Delta+[\Delta,\Delta]=\alg{g}$ in both cases. The metric is positive definite on the
image, and formula \eqref{eq:kernel} gives the rank and corank.
\end{proof}

Theorem \ref{thm:main}(B) is indeed the following theorem.
\begin{theorem}\label{thm:isom}
Under the above hypotheses. Let $(G/H,\Delta,\langle\cdot,\cdot\rangle_\Delta)$ be a compact, connected, simply connected
 sub-Riemannian geodesic orbit manifold with $G$ semisimple and $H$ simple. Then it is isometric to
$((K\times H)/\diag H,\Delta,B|_{\Delta})=(K,\Delta,b)$, where $K$ is a compact simply connected semisimple Lie group,  $b$ is a positive bi-invariant metric on $K$, and
$\Delta=\alg{h}^{\perp_b}\subset\alg{k}$ is the $b$-orthogonal complement of the Lie algebra
$\alg{h}$ of $H$ in the Lie algebra $\alg{k}$ of $K$,
$H\simeq (\rho_{i}(H))_{i\neq j}\subset K$ is a
connected simple subgroup that is not normal.
\end{theorem}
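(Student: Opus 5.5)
We first put the structure into the normal form of Theorem~\ref{thm:criterion}, and then identify it explicitly with the left-invariant structure on $K$. By the reduction at the start of Section~\ref{sec:compact} we may assume $I=\varnothing$, and here $r\ge2$. Lemma~\ref{lem:form} then says the cometric is $A=A_c$ for some coefficients $c_i$. Properness, nonnegativity and bracket generation are exactly the three properties of Theorem~\ref{thm:criterion}, so there is an index $j\in F$ with $c_i>0$ for $i\neq j$ and $c_j=-\bigl(\sum_{i\neq j}c_i^{-1}\bigr)^{-1}$, and the kernel is given by \eqref{eq:kernel}. Since $j\in F$, the map $\rho_j:\alg{h}\to\alg{g}_j$ is an isomorphism. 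Set $\alg{k}=\bigoplus_{i\neq j}\alg{g}_i$ and let $K$ be the corresponding simply connected compact group. Then $\alg{g}=\alg{k}\oplus\alg{g}_j\cong\alg{k}\oplus\alg{h}$, and $\alg{h}$ sits diagonally as $\{((\rho_i(u))_{i\neq j},u)\}$.

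Next we identify the geometry. The subgroup $\diag H$ acts freely on the $\alg{g}_j\cong H$ factor, so the map $(k,h)\mapsto kh'^{-1}$ identifies $(K\times H)/\diag H$ with $K$. Here $h'$ is the image of $h$ under $\rho_j$ inverse composed with the embedding $\rho_{\ne j}$; equivalently we use the section $\alg{k}\oplus 0$. Simple connectivity of $M$, together with almost effectiveness, lets us pass to simply connected groups. The tangent space at the base point is then $\alg{k}$, via the projection $\alg{m}\to\alg{k}$ along $\alg{h}$. Under this map, $\Ker A_c$ corresponds to $\{(c_i^{-1}\rho_i(u))_{i\ne j}\}$ modulo the diagonal.

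On $\alg{k}$ we take the bi-invariant metric $b=\sum_{i\neq j}c_i^{-1}B_i$. We then verify two things. First, the horizontal space $\Delta=(\Ker A_c)^{\perp_B}\cap\alg{m}$ maps onto $\alg{h}^{\perp_b}$, where $\alg{h}$ is embedded in $\alg{k}$ by $\rho_{\ne j}$. Second, the metric $Q=\bigl(\sum_i c_i^{-1}B_i\bigr)|_{\Delta}$ pulls back to $b|_{\Delta}$. Both are direct computations with the decomposition \eqref{eq:decomp}. The key identity is $\sum_{i\in J}c_i^{-1}=0$, which makes the $j$-th contribution cancel appropriately: for $z\in\alg{m}$ we have $z_j=-\sum_{i\neq j}z_i$ on the diagonal part, and $c_j^{-1}=-\sum_{i\ne j}c_i^{-1}$. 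The condition ``$P\neq\varnothing$ or $r\ge3$'' is exactly the statement that $\rho_{\ne j}(\alg{h})$ is not an ideal of $\alg{k}$, i.e.\ that $H\subset K$ is not normal. Indeed, if $P=\varnothing$ and $r=2$, then $\alg{k}=\alg{g}_i\cong\alg{h}$. Conversely, a proper or multiply-projected diagonal image cannot be an ideal.

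We expect the main obstacle to be the metric identification. One must show that the restriction of the $G$-invariant structure, transported through the non-orthogonal projection $\alg{m}\to\alg{k}$, coincides with $b$ on $\alg{h}^{\perp_b}$. It is not a priori clear that the $\alg{g}_j$-component contributes with the right sign. We would handle this by writing a horizontal vector as $w=(w_i)_{i\neq j}\oplus w_j$ with $w_j\in\alg{g}_j\cong\alg{h}$, imposing orthogonality to $n(u)$, and solving for $w_j$. Explicitly, $B_{\alg h}\bigl(u,\sum_{i\ne j}c_i^{-1}\rho_i^{-1}P_{\alg h_i}w_i+c_j^{-1}w_j\bigr)=0$, which expresses $w_j$ in terms of the other components. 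Substituting this into the quadratic form, together with the diagonal correction of the base point, should collapse $Q$ to $b$ after using $c_j^{-1}=-S$. Finally, every such structure is GO and bracket generating by Lemma~\ref{lem:compGO} and Theorem~\ref{thm:criterion}, and the corank equals $\dim H$.
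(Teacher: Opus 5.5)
Your proposal is correct and follows essentially the paper's route. You use the same $\alg{k}=\bigoplus_{i\neq j}\alg{g}_i$, the same $b=\sum_{i\neq j}c_i^{-1}B_i$, the same tangent map (projection along the diagonal copy of $\alg{h}$), and the same identity $c_jS=-1$, i.e.\ $\sum_{i\in J}c_i^{-1}=0$, to match $\Delta$ with $\alg{h}^{\perp_b}$ and $Q$ with $b$. Your version of the metric check, via $Q=(\sum_i c_i^{-1}B_i)|_{\Delta}$ and the orthogonality relation $\sum_{i\neq j}c_i^{-1}\rho_i^{-1}P_{\alg{h}_i}w_i=S\,w_j$, does collapse to $b(Tw,Tw)=Q(w,w)$, just as the paper's identity $b(T(A_cp),T(A_cp))=B(p,A_cp)$ does.

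The only real difference is the non-normality step. You translate the condition ``$P\neq\varnothing$ or $r\ge3$'' of Theorem~\ref{thm:criterion} into ``$\rho_{\neq j}(\alg{h})$ is not a simple factor of $\alg{k}$'', using the ideal structure of the semisimple algebra $\alg{k}$. The paper instead argues intrinsically on $K$ that $\alg{h}^{\perp_b}$ generates $\alg{k}$ exactly when $\alg{h}$ is not an ideal. Both arguments are valid.
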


\begin{proof}
Notation as in Theorem~\ref{thm:criterion}. Put $\alg{k}=\bigoplus_{i\neq j}\alg{g}_i$ and
$b=\sum_{i\neq j}c_i^{-1}B_i$. The remaining projection embeds $\alg{h}$ into $\alg{k}$. At the
group level, replace $H$ by its image in $K$, dividing out the possible finite ineffective kernel.
The homogeneous quotient $(K\times H)/\diag H$ is $K$, with tangent map
\[ T\big((x_i)_{i\neq j},z_j\big)=\big(x_i-\rho_i(z_j)\big)_{i\neq j}, \]
where $\rho_i$ is the isotropy embedding. For $p\in\alg{m}$ with active
components $z_i$, orthogonality to $\Ker A_c$ reads
\[ \sum_{i\neq j}z_i/c_i-Sz_j=0. \]
This is precisely $b(Tp,\alg{h})=0$. Also $T(A_cp)_i=c_ip_i-c_j\rho_i(z_j)$, and
$\sum_{i\neq j}z_i=-z_j$, $c_jS=-1$ give
\[ b\big(T(A_cp),T(A_cp)\big)=\sum_{i\neq j}c_i|p_i|^2+(2c_j+c_j^2S)|z_j|^2=B(p,A_cp). \]
Polarization proves that the resulting horizontal metric is exactly $Q=b|_{\Delta}$.

It remains to determine bracket generation. The Lie algebra generated by $\alg{h}^{\perp_b}$ is
$\alg{h}$-invariant, hence is an ideal of $\alg{k}=\alg{h}+\alg{h}^{\perp_b}$. Its
$b$-orthogonal complement is also an ideal and lies in $\alg{h}$. Simplicity of $\alg{h}$ makes
this complement either zero or $\alg{h}$. The latter occurs exactly when $\alg{h}$ is an ideal of
$\alg{k}$. This proves the stated condition that $H$ be non-normal.
\end{proof}

\begin{example}\label{ex:three}
Let $\alg{g}=\alg{h}\oplus\alg{h}\oplus\alg{h}$, with diagonally embedded compact simple $\alg{h}$,
equal normalized forms, and $(c_1,c_2,c_3)=(1,1,-\tfrac12)$. Then
\[ \Ker A_c=\{(u,u,-2u):u\in\alg{h}\},\qquad \Delta=\{(x,-x,0):x\in\alg{h}\}. \]
On $\Delta$, $A_c=I$ and $Q=B|_{\Delta}$. Brackets of horizontal vectors have the form
$([x,y],[x,y],0)$; together with $\Delta$ and the diagonal isotropy they span all of $\alg{g}$.
\end{example}

\begin{example}\label{ex:two}
Let $\rho:\alg{h}\hookrightarrow\alg{g}_1$ be a proper embedding into a compact simple algebra,
and set $\alg{g}=\alg{g}_1\oplus\alg{h}$ with isotropy $\{(\rho(u),u):u\in\alg{h}\}$. With
normalized forms, take $(c_1,c_2)=(a,-a)$, $a>0$. Then $\Delta=\alg{p}_1\oplus0$, and
$Q=a^{-1}B_1$ on this subspace. This is an admissible two-active-factor case because the first
projection is proper. If instead $\alg{g}_1=\alg{h}$ and $\rho$ is an isomorphism, then $A_c=0$ on
the active tangent space and bracket generation fails.
\end{example}

\section*{Declarations}\label{sec:declarations}

\noindent\textbf{Acknowledgments.} H. An is supported by the special fund for basic scientific
research expenses of universities in Liaoning Province (LJ212410165012). Z. Yan is supported by the
Zhejiang Provincial Natural Science Foundation of China under Grant No. LMS25A010010. H. Zhang is
supported by NSFC (Grant No. 12301032), NSF of Jiangsu (Grant No. BK20230803) and Zhishan Scholars
Programs of Southeast University (Grant No. 2242024RCB0039). S. Zhang is partially supported by the
Natural Science Foundation of Shandong Province (No. ZR2026MS0024) and by the Science and
Technology Support Plan for Youth Innovation of Colleges and Universities of Shandong Province of
China (No. 2023KJ090).

\noindent\textbf{AI Disclosure.} Generative artificial intelligence tools, specifically ChatGPT and
WorkBuddy, were used during the preparation of this manuscript as auxiliary aids for language
editing, improving exposition and organization. All mathematical statements, proofs,
computations, references, and conclusions were independently verified by the authors, who take full
responsibility for the content of the manuscript.

\noindent\textbf{Data availability statement.} This manuscript has no associated data.

\noindent\textbf{Conflicts of interest.} The authors declare that they have no conflict of
interest.

\end{document}